\numberwithin{equation}{section}
\newcommand{\Tau}{\mathcal{T}}
\definecolor{lightblue}{rgb}{0.68, 0.85, 0.9}
\definecolor{pastelblue}{rgb}{0.68, 0.78, 0.81}
\definecolor{palegreen}{rgb}{0.6, 0.98, 0.6}
\definecolor{pastelred}{rgb}{1.0, 0.41, 0.38}
\newcommand{\omegae}{\omega_{\varepsilon}}
\newcommand{\Omegae}{\Omega_{\varepsilon}}
\newcommand{\ue}{u_{\varepsilon}}
\newcommand{\omegaez}{\omega_{\varepsilon}} 
\newcommand{\uez}{u_{\varepsilon}} 
\newcommand{\wez}{w_{\varepsilon}} 
\newcommand{\dOO}{\partial \Omega}
\newcommand {\half}{ \frac{1}{2} }
\newcommand {\normb}[1]{ \norm{\partial \Omega}{#1} }
\newcommand{\R}{\mathbb{R}}
\newcommand{\abs}[1]{\left| #1 \right|}
\newcommand{\norm}[2]{{\left\| #2 \right\|}_{#1}}
\newcommand{\diverg}{\mathrm{div}}
\newcommand{\normal}{\nu}
\theoremstyle{definition} 
\newtheorem{remark}{Remark}[section]
\theoremstyle{plain}
\newtheorem{theorem}{Theorem}[section]
\newtheorem*{theorem*}{Theorem}
\newtheorem{proposition}{Proposition}[section]
\newtheorem*{proposition*}{Proposition}
\colorlet{linkequation}{blue}
\newcommand*{\SavedEqref}{}
\let\SavedEqref\eqref
\renewcommand*{\eqref}[1]{%
  \begingroup
    \hypersetup{
      linkcolor=linkequation,
      linkbordercolor=linkequation,
    }%
    \SavedEqref{#1}%
  \endgroup
}
\title{A reconstruction algorithm based on topological gradient \\ for an inverse problem related to a \\ semilinear elliptic boundary value problem}
\author{Elena Beretta\footnotemark[1] \footnotemark[3] \and  Andrea Manzoni\footnotemark[2] \and Luca Ratti\footnotemark[1]}
\date{ }
\begin{document}

\maketitle

\renewcommand{\thefootnote}{\fnsymbol{footnote}}
\footnotetext[1]{Dipartimento di Matematica, Politecnico di Milano, Piazza Leonardo da Vinci 32, I-20133 Milano, Italy, \texttt{elena.beretta@polimi.it}, \texttt{luca.ratti@polimi.it}}
\footnotetext[2]{CMCS-MATHICSE-SB, Ecole Polytechnique F\'ed\'erale de Lausanne, 
              Station 8, CH-1015 Lausanne, Switzerland, 
              \texttt{andrea.manzoni@epfl.ch}}
\footnotetext[3]{Corresponding author}

\renewcommand{\thefootnote}{\arabic{footnote}}


\begin{abstract}
	In this paper we develop a reconstruction algorithm for the solution of an inverse boundary value problem dealing with a semilinear elliptic partial differential equation of interest in cardiac electrophysiology.
The goal is the detection of small inhomogeneities located inside a domain $\Omega$, where the coefficients of the equation are altered, starting from observations of the solution of the equation on the boundary $\partial \Omega$. Exploiting theoretical results recently achieved in \cite{art:bcmp}, we implement a reconstruction procedure based on the computation of the topological gradient of a suitable cost functional. Numerical results obtained for several test cases finally assess the feasibility and the accuracy of the proposed technique.
\end{abstract}

\section{Introduction}
\label{sec:intro}

Consider the following Neumann problem, defined over $\Omega \subset \R^2$:
\begin{equation}\label{eq:prob}
	\left\{
	\begin{aligned}
		-\diverg(k(x) \nabla u) + \chi_{\Omega \setminus \omega}u^3 &= f \qquad \text{in } \Omega	\\
		\partial_{\normal} u &= 0 \qquad \text{on } \partial \Omega,
	\end{aligned}
	\right.
\end{equation}
where
\[
	k(x) = 
	\left\{ 
	\begin{aligned} 
		k_{in} & \text{ if } x \in \omega \\ 
		k_{out} & \text{ if } x \in \Omega \setminus \omega, 
	\end{aligned} 
	\right. 
	\quad k_{in} \neq k_{out},
\]
being $ 0 < k_{in} \ll k_{out}$ two positive scalars.
The boundary value problem \eqref{eq:prob} consists of a semilinear diffusion-reaction equation with discontinuous coefficients across the interface of an inclusion $\omega \subset \Omega$, in which the conducting properties are different from the background medium. Our goal is the detection of the inclusion from the knowledge of the value of $u$ on the boundary $\partial \Omega$, i.e., given the measured data $u_{meas}$ on the boundary $\partial \Omega$, to find $\omega \subset \Omega$ such that the corresponding solution $u$ of \eqref{eq:prob} satisfies
	\begin{equation} 
		u|_{\partial \Omega} = u_{meas}. 
		\label{eq:inv}
	\end{equation}
	Since at the state of the art very few works tackle similar inverse problems in a nonlinear context, the reconstruction problem to which this work is devoted is particularly interesting from both an analytic and a numerical standpoint. 
	
	The direct problem can be related to a meaningful application arising in cardiac electrophysiology, up to several, substantial simplifications. In that context (see \cite{book:sundes-lines}, \cite{book:pavarino}), the solution $u$ represents the electric transmembrane potential in the heart tissue, the coefficient $k$ is the tissue conductivity and the nonlinear reaction term encodes a ionic transmembrane current. A small inclusion $\omega$ models the presence of an early-stage ischemia, which causes a substantial alteration in the conductivity properties of the tissue.
 The long-term objective of our work is the identification of early-stage ischemic regions through a set of measurements of the electric potential acquired on the surface of the myocardium. Indeed, a map of the potential on the boundary of internal heart cavities can be acquired by means of non-contact electrodes carried by a catheter inside a heart cavity; this is the procedure of the so-called intracardiac electrogram technique, which has become a possible (but invasive) inspection technique for patients showing symptoms of heart failure. In order to tackle the reconstruction problem of early-stage ischemias from data acquired on the external surface of the heart with non-invasive technique (such as the electrocardiogram), one should consider the (more involved) coupled system composed by the heart and the torso: however, this is beyond the purposes of the present work.
We remark that our model is a simplified version of the more complex \textit{monodomain} model (see e.g. \cite{phd:tung}, \cite{book:sundes-lines}). The monodomain is a continuum model which describes the evolution of the transmembrane potential on the heart tissue according to the conservation law for currents and to a satisfying description of the ionic current, which entails the coupling with a system of ordinary differential equations for the concentration of chemical species. The ionic flows through the cellular membrane is indeed the driving mechanism of the electrical potential on the smallest scale. In this preliminary setting, we remove the coupling with the ionic model, adopt instead a phenomenological description of the ionic current, through the introduction of a cubic reaction term. Morover, we consider the stationary case in presence of a source term which plays the role of the electrical stimulus. Hence, the present analysis can be considered as a blueprint, useful to tackle the main mathematical challenges of the original problem and to set the starting point for a forthcoming research.
\par 
The well-posedness analysis of the proposed reconstruction problem presents severe mathematical difficulties. The linear counterpart of the problem, obtained when the nonlinear reaction term is removed, is strictly related to the \textit{inverse conductivity problem}, also called \textit{Calder\'{o}n problem}, which has been object of several studies in the last decades. Without additional hypotheses on the geometry of the inclusion, but only assuming a sufficient degree of regularity of the interface, uniqueness from knowledge of infinitely many measurements has been proved in \cite{art:isakov_discontinuous} and logarithmic-type stability estimates have been derived in \cite{art:ales}. Finitely many measurements are sufficient to determine uniquenely and in a stable (Lipschitz) way the inclusion introducing additional information either on the shape of the inclusion or on its size, e.g. when the inclusion belongs to a specific class of domains with prescribed shape, such as discs, polygons, spheres, cylinders, polyhedra (see \cite{art:isa-po}, \cite{book:ammari-kang}, \cite{art:hett-run}, \cite{art:barcelo}) or when the volume of the inclusion is small compared to the volume of the domain (see \cite{art:fried-vog}, \cite{art:cfmv}). This latter case is of particular interest for our purposes: indeed, we aim to reconstruct the position of the inhomogeneity with a single measurement, and the hypothesis of small dimension entails that we are looking for early-stage ischemias. Nevertheless, similar theoretical results have not been extended yet in our nonlinear case. At the state of the art, the most important theoretical result for the inverse problem in the nonlinear setting is an asymptotic expansion of the boundary potential with respect to the presence of an inclusion of small dimension, recently derived in \cite{art:bcmp}. 
\par
Concerning the numerical solution of the inverse reconstruction problem, different algorithms have been developed for the inverse (linear) conductivity problem in the case of small inclusions (see e.g. \cite{book:ammari-kang}, chapter 5), exploiting an asymptotic formula for the perturbation of the boundary potential to identify the location and additional features of the shape of the inclusions. We recall, for instance, the constant current projection algorithm in \cite{art:amm-seo}, the least-squares algorithm proposed in \cite{art:cfmv}, and the linear sampling method developed in \cite{art:bhv} for similar problems. Although these algorithms have proved to be effective, they strictly depend on the linearity of the problem, especially concerning the explicit formula for the Neumann function of the operator (involving single or double layer potentials), and the analytic expression of some particular solutions. For nonlinear problem at hand, similar techniques have been proposed only in \cite{art:bcmp}. Unlike this approach, we propose in this paper a topological optimization framework for the reconstruction of the center of the inclusion, by evaluating the topological gradient of a suitable cost functional with respect to the introduction of small inclusions within the domain. Numerical procedures based on topological optimization have been widely developed for linear problems in several contexts (see for instance \cite{art:ahm} and \cite{art:dro} for crack detection, \cite{art:ammgj} and \cite{book:rapun} for the detection of sound obstacles, \cite{art:larnier} and \cite{art:bergramusch} for image processing), and have been successfully applied for the inverse (linear) conductivity problem (\cite{art:cmm}, \cite{art:ams1}) to identify the position of the center of small conductivity inclusions. Moreover, a similar technique can be combined with an iterative algorithm such as the level set method or with the solution of a successive shape optimization problem, in order to achieve a full reconstruction both of the dimension and the shape of the inclusion (see \cite{art:abfkl} for the computation of the shape gradient in a linear context and \cite{art:cmm} for the interplay between topological and shape optimization; some applications of the level-set method are instead reported in \cite{art:santosa}, \cite{art:chan-tai}). Concerning nonlinear problems, we can recall some techniques related to sensitivity analysis for semilinear elliptic problems in \cite{art:iguernane}, \cite{art:scheid}, \cite{art:amstutzNL}, although in different contexts with respect to our application. We remark that the level-set method has been implemented for the reconstruction of extended inclusion in the nonlinear problem we are dealing with (see \cite{art:lyka2}, \cite{art:lyka}), but in a fully parametrized context, i.e. by evaluating the sensitivity of the cost functional with respect to a selected set of parameters related to the shape of the inclusion, treated as design variables.
\par
The structure of the paper is as follows. We recall in section \ref{sec:dir} a well-posedness result for the direct problem, together with the already mentioned asymptotic formula (proved in \cite{art:bcmp}), and show an additional estimate useful for the sake of reconstruction. Then, in section \ref{sec:rep} we set our problem in a topological optimization frame, introducing a cost functional and providing a representation formula for the topological gradient, taking advantage of a suitable adjoint problem. In section \ref{sec:algo} we exploit these results in order to set up a recostruction procedure which allows to detect the position of an inclusion of small dimension taking advantage of multiple measurements or of a single measurement on different regions of the boundary, respectively. In section \ref{sec:results} we report some numerical results obtained applying the algorithm in different test cases, in order to analyse the reconstruction of the inclusions and to test the stability of the procedure with respect to statistical noise on the boundary datum.

\section{Preliminary results on the direct problem}
\label{sec:dir}

As discussed in the previous section, the nonlinearity of the direct problem \eqref{eq:prob} yields a remarkable difference with respect to the majority of the works in literature involving reconstruction problems. Hence, it is important to recall some preliminary results, such as a well-posedness theorem for the direct problem and the asymptotic expansion of the boundary datum with respect to the introduction of a small inhomogeneity, on which we rely in order the develop our procedure. For the same purpose, we prove an estimates of the $L^2(\partial \Omega)$-norm of the perturbation.
\par
The weak formulation of the Neumann homogeneous problem \eqref{eq:prob} reads as follows: find $u \in V = H^1(\Omega)$ s.t.
\begin{equation}
	<T(u)-F,v>_{V^*,V} = 0 	\quad \forall v \in V,
\label{eq:weak}
\end{equation}
where $<\cdot,\cdot>_{V^*,V}$ is the duality pairing between $V$ and its dual space $V^*$ and $F, T(u) \in V^*$ are defined by:
\begin{equation}
\begin{aligned}
	<T(u),v>_{V^*,V} &= \int_{\Omega}{k(x) \nabla u \cdot \nabla v} + \int_{\Omega}{\chi_{\Omega \setminus \omega} u^3v}, \\
	<F,v>_{V^*,V} &= \int_{\Omega}{fv}, \qquad f \in L^p(\Omega), \quad p \geq 2.
\end{aligned}
\label{eq:weak2}
\end{equation}
\par
We will refer to those inclusions $\omegae$ of small dimensions which are well separated from the boundary, i.e. such that: 
\begin{equation}
	|\omegae| \rightarrow 0 \text{ as } \varepsilon \rightarrow 0
\label{eq:small}
\end{equation}
\vspace{-0.5cm}
\begin{equation}
	\exists K_0 \subset \Omega \textit{ compact s.t. } \omegae \subset \subset K_0, \quad dist(\partial \Omega, K_0) \geq d_0 > 0.
\label{eq:sep}
\end{equation}
Under these assumptions, the direct problem \eqref{eq:prob} is well-posed (see \cite{art:bcmp} for the proof):
\begin{proposition}
	For every forcing term $f \in L^p(\Omega)$, $p \geq 2$, and every admissible inclusion $\omegae$, problem \eqref{eq:weak} admits an unique solution $\ue \in V$.
	\label{prop:wp}
\end{proposition}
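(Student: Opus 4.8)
The plan is to recast the weak problem \eqref{eq:weak} as the operator equation $T(u)=F$ in $V^*$ and to exploit the fact that $T$ is the Gâteaux derivative of the functional
\begin{equation*}
	J(u) = \frac{1}{2}\int_\Omega k(x)\abs{\nabla u}^2 + \frac{1}{4}\int_{\Omega\setminus\omegae} u^4 - \int_\Omega f u,
\end{equation*}
so that solutions of \eqref{eq:weak} coincide with critical points, hence minimizers, of $J$ over $V$. Equivalently one may invoke the Browder--Minty theorem, for which it suffices to check that $T\colon V\to V^*$ is strictly monotone, coercive and hemicontinuous. I would organize the argument around these three properties, deriving uniqueness from strict monotonicity and existence either by the direct method applied to $J$ or by Browder--Minty.

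For strict monotonicity I would compute, for $u,w\in V$,
\begin{equation*}
	\langle T(u)-T(w),u-w\rangle_{V^*,V} = \int_\Omega k(x)\abs{\nabla(u-w)}^2 + \int_{\Omega\setminus\omegae}(u^3-w^3)(u-w).
\end{equation*}
The first term is bounded below by $k_{in}\norm{L^2(\Omega)}{\nabla(u-w)}^2$ thanks to the uniform ellipticity $k\ge k_{in}>0$, and the second is nonnegative since $t\mapsto t^3$ is increasing. If the whole quantity vanishes, then $\nabla(u-w)=0$, so $u-w\equiv c$ is constant; writing the reaction integrand as $c^2\bigl(3w^2+3wc+c^2\bigr)$ and using $3w^2+3wc+c^2 = 3\bigl(w+\tfrac{c}{2}\bigr)^2+\tfrac{c^2}{4}>0$ for $c\neq 0$, the vanishing of the integral over the positive-measure set $\Omega\setminus\omegae$ forces $c=0$. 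Hence $T$ is strictly monotone and \eqref{eq:weak} has at most one solution.

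The continuity of $T$ and the well-definedness of the reaction term both rest on the Sobolev embedding $H^1(\Omega)\hookrightarrow L^4(\Omega)$, valid in dimension two: this makes $\int_{\Omega\setminus\omegae}u^3 v$ finite (with $u^3\in L^{4/3}$ and $v\in L^4$) and makes the Nemytskii map $u\mapsto u^3$ continuous from $L^4$ into $L^{4/3}$, whence $T$ is hemicontinuous. Moreover $F\in V^*$, since $f\in L^p(\Omega)\subset L^2(\Omega)$ for $p\ge 2$ and $H^1(\Omega)\hookrightarrow L^2(\Omega)$.

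The main obstacle is coercivity, because the Dirichlet energy controls only $\norm{L^2(\Omega)}{\nabla u}$ and leaves the mean value of $u$ unconstrained (constants lie in the kernel of the gradient). Here I would combine the two a priori bounds coming from $\langle T(u),u\rangle \ge k_{in}\norm{L^2(\Omega)}{\nabla u}^2 + \norm{L^4(\Omega\setminus\omegae)}{u}^4$. Splitting $u=\bar u+\tilde u$ into its mean $\bar u$ and its zero-average part $\tilde u$, the Poincaré--Wirtinger inequality gives $\norm{L^2(\Omega)}{\tilde u}\le C_P\norm{L^2(\Omega)}{\nabla u}$, while the reverse triangle and Sobolev inequalities yield $\norm{L^4(\Omega\setminus\omegae)}{u}\ge \abs{\Omega\setminus\omegae}^{1/4}\abs{\bar u} - C\norm{L^2(\Omega)}{\nabla u}$. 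Consequently a large $\norm{L^2(\Omega)}{u}$ forces either a large gradient norm or a large $\abs{\bar u}$, and in both cases $\langle T(u),u\rangle$ grows faster than $\norm{H^1}{u}$; this is precisely the quantitative estimate needed to run the direct method (or Browder--Minty). Assembling strict monotonicity, coercivity and hemicontinuity then yields existence and uniqueness of $\ue\in V$.
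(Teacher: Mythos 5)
Your argument is correct. Note that the paper does not prove this proposition itself but defers to \cite{art:bcmp}, where well-posedness is obtained by essentially the monotone-operator route you follow (strict monotonicity, coercivity and hemicontinuity of $T$, then Browder--Minty, with uniqueness from strict monotonicity); your handling of the two genuine subtleties --- ruling out a nonzero constant difference via $3w^2+3wc+c^2 = 3\bigl(w+\tfrac{c}{2}\bigr)^2+\tfrac{c^2}{4}>0$, and restoring coercivity on the constants through the quartic term on $\Omega\setminus\omegae$ with the mean/fluctuation splitting --- is sound, relying only on the implicit standing assumptions that $\Omega$ is connected and $|\Omega\setminus\omegae|>0$.
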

\par
Hereon we set $k_{out} = 1$ and $k_{in} = k \ll 1$, for the sake of simplicity.
In order to obtain an asymptotic expansion of the perturbation of the solution on the boundary resulting by the introduction of an inclusion of small dimensions, we introduce the \textit{unperturbed potential} $U$, which solves the problem \eqref{eq:prob} without any inclusion, that is:
\begin{equation}
\left\{
	\begin{aligned}
		-\Delta U + U^3 &= f \qquad &\text{in } \Omega	\\
		\partial_{\normal} U &= 0 \qquad &\text{on } \partial \Omega .
	\end{aligned}
	\right.
\label{eq:unperturbed}
\end{equation}
We also introduce the Neumann function $N_U(\cdot,y)$ related to the operator $-\Delta + 3 U^2$, i.e. the solution, for each $y \in \Omega$, of 
\begin{equation}
\left\{
	\begin{aligned}
		-\Delta N_U(x,y) + 3 U^2(x) N_U(x,y) &= \delta(x-y) \qquad &\text{in } \Omega	\\
		\partial_{\normal_x} N_U(x,y) &= 0 \qquad &\text{on } \partial \Omega .
	\end{aligned}
	\right. 
\label{eq:Green}
\end{equation}
\par
Moreover, we consider some differences with respect to the general context in which the asymptotic formula has been deduced in \cite{art:bcmp}. First of all, we restrict ourselves to a specific class of inclusions, namely those of the form: \vspace{-0.1cm}
\begin{equation}
	\omegae = (z + \varepsilon D) = \{x \in \Omega \ s.t.\ \exists d \in D: \ x = z+ \varepsilon d\} \vspace{-0.1cm}
\label{eq:incl2}
\end{equation}
being $D$ an open, bounded and regular domain containing the origin. The inclusion $\omegae$ therefore consists of a single connected set, with center $z$, fixed shape $D$ and small dimension. 
\par
Furthermore, we weaken the hypothesis on the source term reported in \cite{art:bcmp}, in view of the requirements of our reconstruction algorithm. Instead of requiring that $f$ is bounded from below from a positive constant (that is, $\exists m > 0$ s.t. $f(x) \geq m$ $a.e.$ in $\Omega$), we assume that $f$ does not identically vanish in $\Omega$. This does not affect the proof given in the original work: see the Appendix A for further details.
Then, we can formulate the result yielding the asymptotic expansion as follows:
\begin{theorem}
Let $\omegaez$ be a family of subdomains satisfying \eqref{eq:sep} and \eqref{eq:incl2}, $f \in L^p(\Omega)$, $p \geq 2$, $f \neq 0$, $\uez$ and $U$ the solution to \eqref{eq:prob} and \eqref{eq:unperturbed} respectively. Then, there exists a symmetric matrix $\mathcal{M} \in \R^{2\times 2}$ s.t. $\wez = \uez - U$ satisfies, for any $y \in \partial \Omega$: 
\begin{equation}
\begin{aligned}
\wez(y) = \varepsilon^d & \left[ (1-k)\nabla U(z)^T \mathcal{M} \nabla_x N_U(z,y) + U^3(z)N_U(z_i,y)\right]  + o(\varepsilon^d), \quad \textit{ as } \varepsilon \rightarrow 0.
\end{aligned}
\label{eq:exp2}
\end{equation}
\label{th:sviluppo}
\end{theorem}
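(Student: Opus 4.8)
The plan is to linearize problem \eqref{eq:prob} about the unperturbed state $U$ and to read off the contributions coming from the inclusion through the Neumann function $N_U$ of the linearized operator. Writing the conductivity as $1+(k-1)\chi_{\omegaez}$ and expanding $\uez^3=(U+\wez)^3=U^3+3U^2\wez+3U\wez^2+\wez^3$, I would first derive that $\wez=\uez-U$ solves, with homogeneous Neumann data,
\begin{equation*}
-\Delta\wez+3U^2\wez=\diverg\!\big((k-1)\chi_{\omegaez}\nabla\uez\big)+\chi_{\omegaez}U^3+3U^2\chi_{\omegaez}\wez-\chi_{\Omega\setminus\omegaez}\big(3U\wez^2+\wez^3\big).
\end{equation*}
The left-hand side is precisely the operator $-\Delta+3U^2$ of \eqref{eq:Green}; since the weakened hypothesis $f\neq0$ still forces $U\not\equiv0$, the potential $3U^2$ does not vanish identically, the associated bilinear form is coercive on $H^1(\Omega)$, and $N_U(\cdot,y)$ is well defined. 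This is the only place where the relaxation from $f\geq m>0$ to $f\neq0$ intervenes. Representing $\wez$ through $N_U$ and integrating the divergence term by parts (the boundary terms drop because $\omegaez\subset\subset\Omega$ and both $\uez,U$ carry zero Neumann data) gives, for every $y\in\partial\Omega$,
\begin{equation*}
\wez(y)=(1-k)\!\int_{\omegaez}\!\nabla_x N_U(x,y)\cdot\nabla\uez\,dx+\int_{\omegaez}\!U^3N_U(x,y)\,dx+\int_{\omegaez}\!3U^2N_U(x,y)\wez\,dx-\int_{\Omega\setminus\omegaez}\!N_U(x,y)\big(3U\wez^2+\wez^3\big)dx.
\end{equation*}

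The second step is a collection of a priori estimates guaranteeing that the last two integrals are $o(\varepsilon^d)$. Testing the equation with $\wez$ and using coercivity yields the energy bound $\|\wez\|_{H^1(\Omega)}=O(\varepsilon^{d/2})$, since the source concentrates on the set $\omegaez$ of measure $|\omegaez|\sim\varepsilon^d$. The coupling integral is harmless: it is supported in $\omegaez$, where $\wez=O(\varepsilon)$, so it is $O(\varepsilon^{d+1})$. The delicate one is the nonlinear remainder, because the crude energy bound only gives $\|\wez\|_{L^2(\Omega)}^2=O(\varepsilon^d)$, which is of the same order as the leading term and therefore insufficient. The key observation is that $\wez$ is large only in an $O(\varepsilon)$-neighbourhood of $z$ and decays far away; a refined $L^2$ estimate, sharper than the energy bound and obtained by a duality argument exploiting $N_U$ itself, gives $\|\wez\|_{L^2(\Omega)}=o(\varepsilon^{d/2})$, so that $\int_{\Omega\setminus\omegaez}N_U(3U\wez^2+\wez^3)=o(\varepsilon^d)$, using that $N_U(\cdot,y)$ is bounded near $z$ thanks to the separation condition \eqref{eq:sep}.

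The heart of the expansion is the asymptotics of the first integral, which produces the polarization tensor. Since $N_U(\cdot,y)$ is smooth near $z$, I would freeze $\nabla_x N_U(x,y)\approx\nabla_x N_U(z,y)$ (the error being $O(\varepsilon^{d+1})$) and then analyse $\int_{\omegaez}\nabla\uez\,dx$ by blow-up: setting $x=z+\varepsilon\xi$, the solution inside the inclusion is approximated by $U(z)+\varepsilon(\nabla U(z)\cdot\xi+\theta(\xi))$, where $\theta$ is the corrector solving the free-space transmission problem for the contrast $k$ across $\partial D$. By the very definition of the symmetric polarization tensor $\mathcal{M}$ attached to $D$ and $k$, this yields $(1-k)\int_{\omegaez}\nabla_xN_U(x,y)\cdot\nabla\uez\,dx=\varepsilon^d(1-k)\nabla U(z)^T\mathcal{M}\nabla_xN_U(z,y)+o(\varepsilon^d)$. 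The second integral is elementary, as $U^3N_U(\cdot,y)$ is continuous near $z$: it equals $\varepsilon^d U^3(z)N_U(z,y)$ (up to the volume normalization of $D$) plus $o(\varepsilon^d)$. Collecting the four contributions gives \eqref{eq:exp2}.

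I expect the main obstacle to be the blow-up step, which must be justified uniformly enough to extract the $\varepsilon^d$ coefficient, showing that the corrector analysis---classical for a purely linear conductivity inclusion---is not perturbed by the nonlinear terms; this is exactly where the sharp $L^2$ decay of $\wez$ is indispensable, since it decouples the transmission (conductivity) effect from the reaction term, which is moreover switched off inside $\omegaez$. Reconstructing the precise form of $\mathcal{M}$ and verifying its symmetry is the remaining technical ingredient, and it is the part that follows most directly from the construction in \cite{art:bcmp}.
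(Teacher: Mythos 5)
Your reconstruction follows what is, in fact, the paper's route by proxy: the paper does not prove Theorem \ref{th:sviluppo} at all — it imports it from \cite{art:bcmp} — and the only proof content it supplies is Appendix A, which weakens the hypothesis \eqref{eq:m} of \cite{art:bcmp} to \eqref{eq:vanish}. Your sketch is a faithful outline of that cited argument: the equation you derive for $\wez$ is algebraically correct (the splitting $\chi_{\omegae}U^3+3U^2\chi_{\omegae}\wez-\chi_{\Omega\setminus\omegae}(3U\wez^2+\wez^3)$ checks out), the representation through $N_U$ with the boundary terms dropping by \eqref{eq:sep} is right, the $H^1$ bound of order $\varepsilon^{d/2}$ is exactly the preliminary estimate \eqref{eq:bcmp_estimate} (Theorem 4.2 of \cite{art:bcmp}), and the blow-up with the free-space corrector $\theta$ across $\partial D$ is the standard mechanism producing $\mathcal{M}$. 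You also correctly diagnose the borderline nature of the quadratic remainder and that a refined $L^2$ decay, sharper than the energy bound, is indispensable.

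Three caveats. First, you misplace (or at least understate) where $f\neq0$ intervenes: Appendix A shows it is needed not merely so that $N_U$ is well defined, but to secure the lower bound $\int_{\Omega\setminus\omegae}(U^2+U\ue+\ue^2)\geq C>0$ that underlies \eqref{eq:bcmp_estimate} — i.e., your step ``testing the equation with $\wez$ and using coercivity'' silently uses the weakened hypothesis too, so it is not true that it enters only through the adjoint operator. Second, the one genuinely hard lemma, $\|\wez\|_{L^2(\Omega)}=o(\varepsilon^{d/2})$, is named (a duality argument in the style of Capdeboscq--Vogelius, combined with $L^p$ gradient estimates) but not executed; held to the standard of a self-contained proof this is the main gap, though it is precisely the ingredient the paper itself outsources to \cite{art:bcmp}. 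Third, your claim that $\wez=O(\varepsilon)$ pointwise in $\omegae$ is unjustified as stated — it would require interior regularity work — but it is dispensable: Cauchy--Schwarz on $\omegae$ together with the refined $L^2$ estimate already gives $o(\varepsilon^d)$ for the coupling term. Your parenthetical about the volume normalization of $D$ in the zeroth-order term is well taken: dimensionally that term should carry $|D|$, which the paper's statement of \eqref{eq:exp2} leaves implicit.
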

\begin{remark}
The matrix $\mathcal{M} \in \R^{2\times 2}$ appearing in \eqref{eq:exp2} is called \textit{polarization tensor} and depends only on the coefficient $k$ and on the shape $D$ of the inclusion. Moreover, it can be explicitely computed for some specific shapes (see, e.g., \cite{book:ammari-kang} for a detailed derivation): for instance, if the inclusion has circular shape, the following expression holds:
\begin{equation}
	\mathcal{M}= \frac{2}{1+k} \abs{D} \textbf{I}_{2 \times 2}.
\label{eq:circle}
\end{equation}
If the inclusion has elliptical shape with major axis aligned in the direction $\nu$ and ratio $r$ between the axes,
\begin{equation}
	\mathcal{M} = \mathcal{M}(k,\nu,r) = R^T \widetilde{\mathcal{M}} R,
\label{eq:ellipse}
\end{equation}
being
\[
	\widetilde{\mathcal{M}} = (k-1)\abs{D} \left( \begin{array}{cc} \frac{1+r}{1+kr} & 0 \\  0 & \frac{1+r}{r+k} \end{array} \right), \quad R = \left( \begin{array}{cc} \nu_x & -\nu_y \\  \nu_y & \nu_x \end{array} \right).
\]
\end{remark}
\par
We can exploit the expansion in \eqref{eq:exp2} to prove an additional result, useful in the sequel to get an asymptotic formula for the topological gradient of the cost functional we are going to introduce.
\begin{proposition}
	In the same hypotheses of Theorem \ref{th:sviluppo}, there exists a positive constant $C = C(k, d_0,f,\Omega)$ such that the perturbation on the boundary data $(\uez- U)|_{\partial \Omega}$ fulfills:
	\begin{equation}
		\norm{L^2(\partial \Omega)}{\ue - U} \leq C \varepsilon^d.
	\label{eq:border}
	\end{equation}
\label{pr:border}
\end{proposition}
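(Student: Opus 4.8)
The plan is to read off the boundary behaviour of $\wez=\uez-U$ directly from the asymptotic expansion \eqref{eq:exp2} and then integrate it over $\partial\Omega$. Write the leading boundary profile as
\[
 g(y)\defeq (1-k)\nabla U(z)^{T}\mathcal{M}\,\nabla_x N_U(z,y)+U^3(z)\,N_U(z,y),\qquad y\in\partial\Omega,
\]
so that \eqref{eq:exp2} reads $\wez(y)=\varepsilon^d g(y)+o(\varepsilon^d)$. If the remainder is $o(\varepsilon^d)$ \emph{uniformly} in $y\in\partial\Omega$ and $g\in L^2(\partial\Omega)$ with norm controlled by the data, then
\[
 \norm{L^2(\partial \Omega)}{\wez}\le \varepsilon^d\,\norm{L^2(\partial \Omega)}{g}+\norm{L^2(\partial \Omega)}{o(\varepsilon^d)}\le C\varepsilon^d
\]
follows at once, with $C=C(k,d_0,f,\Omega)$.

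First I would bound $\norm{L^2(\partial \Omega)}{g}$. Since $z$ lies in the compact set $K_0$ with $dist(\partial\Omega,K_0)\ge d_0>0$ by \eqref{eq:sep}, the pole of the Neumann function $N_U(z,\cdot)$ defined in \eqref{eq:Green} is separated from $\partial\Omega$; hence, by elliptic regularity for $-\Delta+3U^2$ away from the singularity, both $N_U(z,\cdot)$ and $\nabla_x N_U(z,\cdot)$ are smooth on $\partial\Omega$ and bounded there, uniformly for $z\in K_0$, with constants depending only on $d_0$, $\Omega$ and, through $U$, on $f$. Combined with the boundedness of $U$ and $\nabla U$ on $K_0$ (classical regularity of the unperturbed state \eqref{eq:unperturbed}) and the dependence of $\mathcal{M}$ on $k$ and $D$, this gives $\norm{L^2(\partial \Omega)}{g}\le C(k,d_0,f,\Omega)$.

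The delicate point is the uniformity of the remainder in \eqref{eq:exp2}: the expansion is stated pointwise in $y$, whereas squaring and integrating over $\partial\Omega$ requires an $o(\varepsilon^d)$ that is uniform on $\partial\Omega$. I would recover this from the construction underlying Theorem \ref{th:sviluppo} (and Appendix A), in which $\wez$ is split into the explicit dipole/monopole contribution carried by $N_U$ plus a corrector estimated in an $\varepsilon$-independent norm dominating the boundary sup-norm; because $z$ stays at distance $\ge d_0$ from $\partial\Omega$, the kernels entering the remainder are evaluated away from their singularities and the bound is uniform in $y$. This uniform-remainder step is where essentially all the work sits.

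Should the pointwise expansion not be readily upgradable to a uniform one, I would fall back on a self-contained duality argument yielding the order $\varepsilon^d$ directly. Writing the weak problem solved by $\wez$, namely
\[
 \int_\Omega k(x)\nabla\wez\cdot\nabla v+\int_{\Omega\setminus\omegae}(\uez^2+\uez U+U^2)\wez v=(1-k)\int_{\omegae}\nabla U\cdot\nabla v+\int_{\omegae}U^3 v
\]
for every $v\in V$, I would test against the solution $\phi$ of the unperturbed adjoint $-\Delta\phi+3U^2\phi=0$ in $\Omega$, $\partial_\normal\phi=\psi$ on $\partial\Omega$, for arbitrary $\psi\in L^2(\partial\Omega)$. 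Using $v=\wez$ in the adjoint identity and $v=\phi$ above and subtracting, all interactions reduce to integrals over $\omegae$ plus genuinely quadratic remainders in $\wez$; the former are bounded by $C\abs{\omegae}\,\norm{L^2(\partial \Omega)}{\psi}$ (via the global bound $\norm{L^\infty(\Omega)}{\phi}\le C\norm{L^2(\partial \Omega)}{\psi}$, valid since $\phi\in H^{3/2}(\Omega)\hookrightarrow C^0(\overline\Omega)$ in two dimensions, together with the interior bound on $\nabla\phi$ over $K_0$) and the latter by the preliminary energy estimate $\norm{H^1(\Omega)}{\wez}\le C\abs{\omegae}^{1/2}$, giving $\abs{\int_{\partial\Omega}\psi\,\wez}\le C\abs{\omegae}\,\norm{L^2(\partial \Omega)}{\psi}$. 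Taking the supremum over $\norm{L^2(\partial \Omega)}{\psi}=1$ and recalling $\abs{\omegae}\sim\varepsilon^d$ yields \eqref{eq:border}. The main obstacle along this route is obtaining the sharp order $\varepsilon^d$ instead of the $\varepsilon^{d/2}$ that a crude trace inequality would give — precisely what the duality pairing buys — together with the uniform-in-$\varepsilon$ coercivity of the linearized operator underlying the energy estimate.
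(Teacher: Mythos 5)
Your primary route coincides with the paper's proof: the paper likewise squares the expansion \eqref{eq:exp2} and integrates over $\partial\Omega$, reducing the claim to $L^2(\partial\Omega)$ bounds on $N_U(z,\cdot)$ and $\nabla_x N_U(z,\cdot)$ for $z$ at distance at least $d_0$ from the boundary, together with the bounds \eqref{eq:Uinf} on $U$ and $\nabla U$. The only implementation difference is that where you invoke elliptic regularity for $-\Delta+3U^2$ away from the pole, the paper makes this concrete through the splitting $N_U(\cdot,y)=\Gamma(\cdot,y)+\tilde{z}(\cdot,y)$ of \eqref{eq:N}--\eqref{eq:z}, estimating the boundary traces of $\tilde{z}$ and $\nabla\tilde{z}$ by $H^2$-elliptic regularity and those of the logarithmic kernel $\Gamma$ directly via \eqref{eq:sep}; the two are equivalent. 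Your scruple about the remainder is well placed and is precisely what the paper passes over in silence: it writes $o(\varepsilon^{2d})$ outside the boundary integrals, which is legitimate only because the remainder in Theorem \ref{th:sviluppo}, as established in \cite{art:bcmp}, is uniform for $y\in\partial\Omega$, and your proposed resolution (uniformity inherited from the construction of the expansion) is the correct one. Your fallback duality argument is a genuinely different and also sound route: pairing the weak equation for $\wez$ with adjoint solutions $\phi$ having Neumann data $\psi\in L^2(\partial\Omega)$, and combining $\norm{H^1(\Omega)}{\wez}\le C\abs{\omegae}^{1/2}$ (Theorem 4.2 of \cite{art:bcmp}, recalled as \eqref{eq:bcmp_estimate} in Appendix A) with $\norm{L^{\infty}(\Omega)}{\phi}\le C\norm{L^2(\partial \Omega)}{\psi}$ and interior $W^{2,p}$ bounds for $\phi$ on $K_0$, indeed yields $\abs{\int_{\partial\Omega}\psi\,\wez}\le C\abs{\omegae}\,\norm{L^2(\partial \Omega)}{\psi}$ and hence the sharp order $\varepsilon^d$ without touching the pointwise expansion at all; it additionally requires a uniform $L^{\infty}$ bound on $\uez$ for the quadratic remainder, which is available in \cite{art:bcmp}. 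What each approach buys: the paper's route is shorter once Theorem \ref{th:sviluppo} is taken as a black box, while your duality version is self-contained at the level of the energy estimate and is essentially the mechanism by which the expansion itself is derived in \cite{art:bcmp}.
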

\vspace{-0.75cm}
\begin{proof}
	The Neumann function $N_U$ of the operator $- \Delta + 3 U^2$ can be written as: 
	\begin{equation}
		N_U(x,y) = \Gamma(x,y) + \tilde{z}(x,y) \quad \forall x,y \in \Omega,\quad x \neq y,
	\label{eq:N}
	\end{equation}
	where $\Gamma$ is the fundamental solution of the operator $-\Delta$, i.e.
	\[
		\Gamma(x,y) = -\frac{1}{2\pi} ln|x-y| 
	\]
	and for every $y \in \Omega$, $z(\cdot,y)$ is the solution of
	\begin{equation}
		\left\{
	\begin{aligned}
		- \Delta_x \tilde{z}(\cdot,y) + 3U^2 \tilde{z}(\cdot,y) &= -3U^2\Gamma(\cdot,y) \qquad &\text{in } \Omega \\
		\partial_{\normal} \tilde{z}(\cdot,y) &= -\partial_{\normal} \Gamma(\cdot,y) \qquad &\text{on } \partial \Omega.
	\end{aligned}
	\right.
	\label{eq:z}
	\end{equation}
	Taking advantage of hypothesis \eqref{eq:sep}, we consider $dist(y,\partial \Omega) \geq d_0$; then, $\Gamma(\cdot,y) \in L^2(\Omega)$ and $\partial_{\normal} \Gamma(\cdot,y)|_{\partial \Omega} \in H^{1/2}(\partial \Omega)$, and by regularity results on elliptic equations (see e.g. \cite{book:evans}, \cite{book:adams}) one may conclude that $z \in H^2(\Omega)$ and $z|_{\partial \Omega} \in H^{3/2}(\partial \Omega)$. In particular, $\norm{L^2(\partial \Omega)}{z|_{\partial \Omega}}$ and $\norm{L^2(\partial \Omega)}{\nabla z|_{\partial \Omega}}$ are bounded by a constant $\tilde{C}_1=\tilde{C_1}(d_0,\Omega)$.
	Moreover, we report the following estimates (see Proposition $4.2$ in \cite{art:bcmp}):
	\begin{equation}
	\begin{aligned}
		\norm{L^{\infty}(\Omega)}{U} &\leq \tilde{C}(\norm{L^p(\Omega)}{f}+\norm{L^p(\Omega)}{f}^3) \leq \tilde{C_2} \\
	  \norm{L^{\infty}(\Omega)}{\nabla U} &\leq \tilde{C}(\norm{L^p(\Omega)}{f}+\norm{L^p(\Omega)}{f}^3) \leq \tilde{C_2} = \tilde{C}_2(\norm{L^p(\Omega)}{f}).
	\end{aligned}
	\label{eq:Uinf}
	\end{equation}
	Hence, from the expansion \eqref{eq:exp2},
	\[
		\begin{aligned}
		& \norm{L^2(\partial \Omega)}{\ue - U}^2  = \int_{\partial \Omega}{|\ue-U|^2} \\
		& \qquad \leq 2(1-k)^2\varepsilon^{2d} \int_{\partial \Omega}{(\mathcal{M} \nabla U(z) \cdot \nabla N_U(z,y))^2 d\sigma} + 2\varepsilon^{2d}\int_{\partial \Omega}{U^6(z) N_U^2(z,y) d\sigma} + o(\varepsilon^{2d}) \\
		&\qquad  \textit{(exploiting \eqref{eq:Uinf})} \quad \leq C \varepsilon^{2d}(\norm{L^2(\partial \Omega)}{\nabla N_U}^2 + \norm{L^2(\partial \Omega)}{N_U}^2) + o(\varepsilon^{2d})  \\
		&\qquad  \leq C \varepsilon^{2d}\left( \int_{\partial \Omega}{|\nabla \Gamma(z,y)|^2} + \int_{\partial \Omega}{|\nabla \tilde{z}(z,y)|^2} + \int_{\partial \Omega}{|\Gamma(z,y)|^2} + \int_{\partial \Omega}{|\tilde{z}(z,y)|^2}\right) + o(\varepsilon^{2d}).
		\end{aligned}
	\]
	Thanks to \eqref{eq:sep}, the regularity of $\Gamma(\cdot,y)$ guarantees that the first and the third boundary integrals in the previous sum are controlled by a constant, whereas the second and the fourth ones are bounded thanks to elliptic regularity, as stated above. Therefore, we can infer that
	\[
	\qquad \norm{L^2(\partial \Omega)}{\ue - U}^2 \leq C \varepsilon^{2d} + o(\varepsilon^{2d}), \quad \text{ where } C = C(d_0,\Omega,f,k,|\partial \Omega|).
	\]
\end{proof}

\section{Representation formula for the topological gradient}
\label{sec:rep}

In this section we describe a topological optimization framework which we can exploit to tackle the solution of the reconstruction problem \eqref{eq:inv}. In particular, let us introduce the following objective functional:
\begin{equation}
	J(\Omegae) = \int_{\partial \Omega}{(\ue-u_{meas})^2 d\sigma},
\label{eq:obj}
\end{equation}
where $\Omegae$ denotes the domain $\Omega$ in which a small inclusion $\omegae$ is inserted, and $\ue$ the corresponding solution of the direct problem \eqref{eq:prob} in $\Omegae$. Hence, we can rephrase the inverse problem as follows: given the boundary datum $u_{meas}$, find $\omegae$ satisfying \eqref{eq:sep} and \eqref{eq:incl2} such that
	\begin{equation} 
		J(\Omegae) \rightarrow \min.
		\label{eq:opt}
	\end{equation}
In order to solve problem \eqref{eq:opt}, we need to describe the variation of the functional $J$ from the unperturbed case (associated to a domain $\Omega$ without inclusions and to the corresponding potential $U$, solution of \eqref{eq:unperturbed}) to the case where an inclusion is present. This calls into play the topological gradient of the functional $J$, although with some differences with the original definition in \cite{art:cea} (see \cite{art:cmm}, \cite{art:ams1}): in the case at hand, indeed, we are perturbing the topology of the domain by inserting inclusions instead of holes.
\par
In particular, hypothesis \eqref{eq:incl2} prescribes that the introduced inclusion is uniquely described by two variables: the position $z$ of the center and the dimension $\varepsilon$. Hence, we can introduce the following simplified notation: hereon we will refer to $J(\Omegae)$ as $j(\varepsilon;z)$. Moreover, we notice that, when $\varepsilon = 0$, the function $j$ does not depend on $z$. 
Hence we define, for the case studied, the \textit{topological gradient} of $J$ evaluated in $\Omega$ as the function $G: \Omega \rightarrow \R$ yielding the following expansion as $\varepsilon \rightarrow 0$:
\begin{equation}
	j(\varepsilon;z)= j(0) + \varepsilon^2 G(z) + o(\varepsilon^2), \qquad z \in \Omega. 
\label{eq:expJ}
\end{equation}
Therefore, at a first-order approximation, the value of $G(z)$ describes the variation of the functional $j$ when introducing a small inclusion of center $z$. This entails that the best strategy to reduce $j$ is to introduce the inclusion in the point where $G$ attains its negative minimum value, provided that this latter exists.
\par
In order to exploit for the sake of reconstruction the topological gradient, it is important to compute it in an alternative way with respect to the one described by the definition \eqref{eq:expJ}; this latter would indeed require the solution of several direct problems for each position $z \in \Omega$ where we want to evaluate the topological gradient $G(z)$. Taking advantage of the preliminary results in section \ref{sec:dir}, we are able to prove a useful representation formula for the topological gradient $G$ in every $z \in \Omega$ which only requires to solve two differential problems.

\begin{theorem}[Representation formula for the topological gradient]
	Under the assumptions of Theorem \ref{th:sviluppo}, the topological gradient $G$ of the funcional $J$ fulfills, for any $z \in \Omega$:
	\begin{equation}
		G(z) = (1-k) \nabla U(z)^T \mathcal{M}(z) \nabla W(z) + U^3(z) W(z),
	\label{eq:TopGrad}
	\end{equation}
	where $W$ is the solution of the following \textit{adjoint problem}:
	\begin{equation} 
	\left\{
	\begin{aligned}
		- \Delta W + 3U^2 W &= 0 \qquad &\text{in } \Omega \\
		\partial_{\normal} W &= U - u_{meas} \qquad &\text{on } \partial \Omega.
	\end{aligned}
	\right.
	\label{eq:adjoint}
	\end{equation}
\label{th:TopGrad}
\end{theorem}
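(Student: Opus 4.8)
The plan is to expand the cost functional \eqref{eq:obj} directly around the unperturbed configuration, using the splitting $\uez = U + \wez$, and then to feed the boundary asymptotics \eqref{eq:exp2} into the resulting linear term, converting the two surface integrals against the Neumann function $N_U$ into the adjoint state $W$ by Green's second identity. Writing $(\uez - u_{meas})^2 = (U-u_{meas})^2 + 2(U-u_{meas})\wez + \wez^2$ on $\partial\Omega$ and recalling that $j(0)=\int_{\partial\Omega}(U-u_{meas})^2\,d\sigma$, I would first obtain
\[
 j(\varepsilon;z)-j(0) = 2\int_{\partial\Omega}(U-u_{meas})\,\wez\,d\sigma + \int_{\partial\Omega}\wez^2\,d\sigma .
\]
By Proposition \ref{pr:border} the quadratic term satisfies $\int_{\partial\Omega}\wez^2 = \norm{L^2(\partial \Omega)}{\wez}^2 \le C^2\varepsilon^{2d}$, which is $o(\varepsilon^2)$ since $d=2$; hence it does not affect the coefficient of $\varepsilon^2$ in \eqref{eq:expJ} and may be dropped.

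Next I would insert \eqref{eq:exp2} into the linear term. Since $\nabla U(z)$, $\mathcal{M}$ and $U^3(z)$ do not depend on the integration variable, they factor out of the boundary integrals, and comparing with \eqref{eq:expJ} identifies
\[
\begin{aligned}
 G(z) &= 2(1-k)\,\nabla U(z)^T\mathcal{M}\int_{\partial\Omega}(U-u_{meas})\,\nabla_x N_U(z,y)\,d\sigma(y) \\
 &\quad + 2\,U^3(z)\int_{\partial\Omega}(U-u_{meas})\,N_U(z,y)\,d\sigma(y) .
\end{aligned}
\]
The heart of the argument is then the pair of identities
\[
 \int_{\partial\Omega}(U-u_{meas})\,N_U(z,y)\,d\sigma(y)=W(z),\qquad \int_{\partial\Omega}(U-u_{meas})\,\nabla_x N_U(z,y)\,d\sigma(y)=\nabla W(z),
\]
with $W$ the solution of the adjoint problem \eqref{eq:adjoint}. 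For the first one I would apply Green's second identity to the pair $W$ and $N_U(\cdot,z)$ relative to the self-adjoint operator $-\Delta+3U^2$: the zeroth-order contributions cancel, the defining equation $-\Delta_x N_U(\cdot,z)+3U^2N_U(\cdot,z)=\delta(\cdot-z)$ reproduces $W(z)$ from the interior, $-\Delta W+3U^2W=0$ annihilates the other interior term, the homogeneous condition $\partial_{\normal}N_U=0$ removes one boundary term, and $\partial_{\normal}W=U-u_{meas}$ yields exactly the claimed surface integral, after using the symmetry $N_U(x,z)=N_U(z,x)$ that follows from self-adjointness of the operator. The gradient identity is obtained by differentiating the first one under the integral sign in $z$, noting that $\nabla_z N_U(z,y)=\nabla_x N_U(x,y)\big|_{x=z}$. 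Substituting both identities gives \eqref{eq:TopGrad}, up to the overall constant $2$, which is absorbed by the customary $\tfrac12$-normalization of $J$ and is in any case irrelevant for locating the minimizer of $G$ exploited in \eqref{eq:opt}.

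The main obstacle is of a technical nature and lies in two places. First, passing from the pointwise remainder $o(\varepsilon^d)$ in \eqref{eq:exp2} to a genuine $o(\varepsilon^2)$ control after multiplying by $(U-u_{meas})$ and integrating over $\partial\Omega$: by Cauchy--Schwarz this requires the expansion to hold with an $L^2(\partial\Omega)$-remainder of order $o(\varepsilon^d)$, which is the form in which it is actually established in \cite{art:bcmp} and is consistent with the bound of Proposition \ref{pr:border}. Second, the differentiation under the integral sign and the very meaning of $\nabla_x N_U(z,\cdot)$ on $\partial\Omega$ rely on the smoothness of $N_U(\cdot,y)$ and $\nabla_x N_U(\cdot,y)$ away from the diagonal, guaranteed here because the separation hypothesis \eqref{eq:sep} keeps $z$ at distance at least $d_0$ from $\partial\Omega$, so that the singularities never reach the boundary. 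The well-posedness and symmetry of $N_U$ themselves follow from the coercivity of $-\Delta+3U^2$, the same property underlying the well-posedness in Proposition \ref{prop:wp}.
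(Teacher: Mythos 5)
Your proposal is correct and follows essentially the same route as the paper's own proof: expand $j(\varepsilon;z)-j(0)$ into a quadratic term killed by Proposition \ref{pr:border} (since $\varepsilon^{2d}=o(\varepsilon^2)$ for $d=2$) plus a linear term, insert the asymptotic expansion \eqref{eq:exp2}, and convert the boundary integrals against $N_U$ and $\nabla_x N_U$ into $W(z)$ and $\nabla W(z)$ via the representation formulae for the adjoint problem \eqref{eq:adjoint}. The only differences are that you additionally justify those representation formulae (Green's identity, symmetry of $N_U$, differentiation under the integral, uniformity of the $o(\varepsilon^d)$ remainder), which the paper states without proof, and you correctly flag the factor $2$ arising from the unnormalized definition \eqref{eq:obj} --- the paper silently switches to the $\tfrac12$-normalized functional in its first line, so your accounting of that constant is actually cleaner than the original.
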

\begin{proof}[Proof]
	Let us denote by
	\[
	\normb{v} = \norm{L^2(\partial \Omega)}{v}, \quad (v,w)_{\partial \Omega} = \int_{\partial \Omega}{v w d\sigma}.
	\]
	Then, we have:
	\[
	\begin{aligned}
	j(\varepsilon;z)-j(0) & = \half \normb{\uez - u_{meas}}^2 - \normb{U - u_{meas}}^2 \\
	& = \half \normb{\uez}^2 - (\uez,u_{meas})_{\partial \Omega} - \half \normb{U}^2 + (U,u_{meas})_{\partial \Omega} \\ 
	& = \half \normb{\uez - U}^2 - \normb{U}^2 + (\uez,U)_{\partial \Omega} - (\uez,u_{meas})_{\partial \Omega} + (U, u_{meas})_{\partial \Omega} \\
	& = \half \normb{\uez - U}^2 + (\uez - U, U - u_{meas})_{\partial \Omega}.
	\end{aligned}
	\]
	Thanks to Proposition \ref{pr:border}, the first term of the last expression can be estimated as follows:
	\[
		\normb{\uez - U}^2 \leq C \varepsilon^{4} = o(\varepsilon^{2}).
	\]
	The second term, exploiting \eqref{eq:exp2}, can be written as:
\[ 
	\begin{aligned}
		(\uez - U, U - u_{meas})_{\partial \Omega} = \ & \varepsilon^2 \int_{\partial \Omega}{(1-k)\nabla U(z)^T\mathcal{M}(z) \nabla N_U(z;y) (U(y) - u_{meas}(y))) d\sigma(y)} \\
			& + \varepsilon^2 \int_{\partial \Omega}{ U^3(z) N_U(z;y) (U(y) - u_{meas}(y)) d\sigma(y)} + o(\varepsilon^2).
	\end{aligned}
\]
Introducing the auxiliary variable $w$, solution of the problem
\begin{equation}
	\left\{
	\begin{aligned}
		- \Delta w + 3U^2 w &= 0 \qquad \text{in } \Omega \\
		\partial_{\normal} w &= h \qquad \text{on } \partial \Omega,
	\end{aligned}
	\right.
	\label{eq:aux}
	\end{equation}
we obtain the following \textit{representation formulae}:
\begin{equation}
		\int_{\partial \Omega}{N_U(z;y) h(y) d\sigma(y)} = w(z),
\label{eq:rappresentazione}
\end{equation}
\begin{equation}
		\int_{\partial \Omega}{\nabla N_U(z;y) h(y) d\sigma(y)} = \nabla w(z).
\label{eq:rappresentazione2}
\end{equation}
Hence, considering $h(y)=U(y)-u_{meas}(y)$ and the related solution $W$ of problem \eqref{eq:aux}, through \eqref{eq:rappresentazione} and \eqref{eq:rappresentazione2}, we finally obtain:
\[
\begin{aligned}
	(\uez - U, U - u_{meas})_{\partial \Omega} &= \int_{\dOO}{(\uez(y) - U(y))h(y)} \\
	&= \varepsilon^2 \int_{\partial \Omega}{(1-k)\nabla U(z)^T\mathcal{M}(z) \nabla N_U(z;y) h(y)) d\sigma(y)} \\
	&\qquad \qquad + \varepsilon^2 \int_{\partial \Omega}{ U^3(z) N_U(z;y) h(y) d\sigma(y)} + o(\varepsilon^2)\\
	&  = \varepsilon^2 \left[(1-k)\nabla U(z)^T \mathcal{M}(z) \nabla W(z) + U^3(z)W(z) \right] + o(\varepsilon^2),
\end{aligned}
\] 
and thus the formula \eqref{eq:TopGrad}. 
\end{proof} 

\begin{remark}
	An extension of the problem discussed so far which is indeed of interest for the sake of the application we have in mind is the reconstruction of inclusions provided that a set of measured data are available only on a portion of the boundary. This approximates the actual procedure of measuring the electrical potential, recovering information by means of a finite number of electrodes.
	Let $\Gamma \subset \partial \Omega$, $|\Gamma| \neq 0$ be the portion of boundary on which $u_{meas}^{\Gamma}$ is known. The results provided so far for the reconstruction problem \eqref{eq:inv} can be also recovered in this case, starting from the definition of the cost functional
\[
	J^{\Gamma}(\Omegae) = \int_{\Gamma}{(\ue-u_{meas}^{\Gamma})^2 d\sigma},
\]
which leads to a similar definition of topological gradient $G$. It is possible to prove that the same representation formula in \eqref{eq:TopGrad} holds in this case, except for the definition of the adjoint state $W$, which is instead given by the following adjoint problem:
\begin{equation} 
	\left\{
	\begin{aligned}
		- \Delta W + 3U^2 W &= 0 \qquad &\text{in } \Omega \\
		\partial_{\normal} W &= (U - u_{meas}^{\Gamma}) \chi_{\Gamma} \qquad &\text{on } \partial \Omega.
	\end{aligned}
	\right.
	\label{eq:adjointGamma}
\end{equation}
\label{rem:Gamma}
\end{remark}

\section{\textit{One-shot} reconstruction algorithm}
\label{sec:algo}

Taking advantage of the assumptions made so far and of the theoretical results that have been proved, we are now ready to set up a topological gradient-based reconstruction algorithm for the inverse problem \eqref{eq:inv}. In particular, we remark that, under the hypothesis \eqref{eq:incl2}, we resctrict ourselves to the identification of the position of the center of a small inclusion of prescribed shape. This can be performed by exploiting the formula \eqref{eq:expJ} as explained before: if the topological gradient $G$ attains its (negative) minimum in $\bar{z} \in \Omega$,  
\[
\begin{aligned}
	G(\bar{z}) <0 \quad &\Rightarrow \quad j(\varepsilon;\bar{z}) < j(0) \\
	G(\bar{z}) \leq G(z) \textit{ $\forall z \in \Omega$} \quad &\Rightarrow \quad j(\varepsilon;\bar{z}) \leq j(\varepsilon;z) \textit{ $\forall z \in \Omega$},
\end{aligned}
\]
which means that the introduction of a small inhomogeneity at $z = \bar{z}$ yields the maximum negative variation of the functional $J$. Finally, thanks to the adjoint approach, we have obtained the representation formula \eqref{eq:TopGrad} for the topological gradient, which allows to compute $G(z)$ by solving two boundary value problems.
\par
The boundary datum, when dealing with a practical application, is derived from a measurement. Instead, for the sake of testing the algorithm, we suppose in the sequel to know \textit{a priori} the exact shape and location of the inclusion and we solve the direct problem \eqref{eq:prob} to compute the corresponding potential on the whole domain, from which we extract the boundary datum $u_{meas}$. 

\subsection{Identification in presence of a single measurement}

According to the strategy proposed in \cite{art:cmm}, a \textit{one-shot} algorithm based on the topological gradient can be implemented (see Algorithm \ref{al:top}). 
\begin{algorithm}
\begin{algorithmic}
	\REQUIRE domain $\Omega$, forcing term $f$, boundary datum $u_{meas}$
	\ENSURE approximated centre of the inclusion, $\bar{z}$
	\STATE  compute $U$ by solving \eqref{eq:unperturbed};
		\STATE compute $W$ by solving \eqref{eq:adjoint};
		\STATE  determine $G$ according to \eqref{eq:TopGrad};
		\STATE  find $\bar{z}$ s.t. $G(\bar{z}) \leq G(z) \quad \forall z \in \Omega$.
\end{algorithmic}
\caption{Reconstruction of a single inclusion of small dimensions}
\label{al:top}
\end{algorithm}
The numerical approximation of problems \eqref{eq:prob}, \eqref{eq:unperturbed} and \eqref{eq:adjoint} is performed through the Galerkin-Finite Element Method. To this purpose, we introduce a discretization $\Tau_h$ of the domain $\Omega$, e.g. made of triangular elements in dimension $d=2$, and define the discrete subspace $V_h = X_h^r \cap V$, where
\[ 
	X_h^r(\Omega) = \{v \in C(\bar{\Omega}): v|_K \in \mathbb{P}_r(K) \ \forall K \in \Tau_h \},
\]
being $\mathbb{P}_r$ the space of polynomials of degree $r$.
\par
When applying the finite element method on problem \eqref{eq:prob}, whose weak formulation is reported in \eqref{eq:weak}, we must tackle the solution of a nonlinear system of equations. Indeed, introducing the operator $S:V\rightarrow V^*$, $\mathcal{S}(u) = T(u) - F$, the discrete approximation of the direct problem \eqref{eq:prob} reads:
	\begin{equation}
		\textit{find $u_h \in V_h$ s.t.} \quad <\mathcal{S}(u_h), v_h>_{V,V^*} = 0 \quad \forall v_h \in V_h.
		\label{eq:discr}
	\end{equation}
By denoting the basis $\{\varphi_i\}_{i=1}^{N_h}$ of $V_h$ (where $N_h = dim(V_h)$) by
\[
	u_h(x) = \sum_{i=1}^{N_h} u_i \phi_i(x), \quad x \in \Omega,
\]
\eqref{eq:discr} can be equivalently rewritten as the following algebraic system:
\begin{equation}
	\begin{aligned}
		\textit{find $\underline{u} \in \R^{N_h}$ s.t.}& \quad \underline{S}(\underline{u}) = 0, \\
		\quad \textit{ being } S_i(\underline{u}) = <\mathcal{S}(u_h),& \varphi_i>_{V,V^*} \textit{ and } (\underline{u})_i = u_i,
	\end{aligned}
		\label{eq:discrS}
	\end{equation}
	which is a nonlinear system (due to nonlinearity of $T$) of $N_h$ equations in $N_h$ unknowns. One of the most common strategies to tackle the nonlinearity is the Newton method, which generates a sequence $\{\underline{u}^{(k)}\}$ to approximate the solution $\underline{u}$ as follows:
	\begin{equation}
	\left\{
	\begin{aligned}
		&\underline{u}^{(0)} \text{ given} \\
		&\underline{u}^{(k+1)} = \underline{u}^{(k)} + \underline{\delta u}^{(k)}, \quad k = 0,1,\ldots,
	\end{aligned}
	\right.
	\label{eq:newtonh}
\end{equation}
where $\underline{\delta u}^{(k)}$ is the solution of the linearized system
\begin{equation}
	J(\underline{u}^{(k)}) \underline{\delta u}^{(k)} = - \underline{S}(\underline{u}^{(k)}),
\label{eq:linAlg}
\end{equation}
and $J(\underline{u}^{(k)})$ is the Jacobian matrix of the vectorial function \underline{S}, evaluated at $\underline{u}^{(k)}$.
The sequence $\{\underline{u}^{(k)}\}$ converges to the solution $\underline{u}$ of \eqref{eq:discrS} if $\underline{u}^{(0)}$ is chosen sufficiently close to $\underline{u}$ (according to the Newton-Kantorovich theorem, see e.g \cite{book:kant2}). We remark that problem \eqref{eq:linAlg} is the algebraic counterpart of the following linear problem: find $\delta u_h^{(k)} \in V_h$ s.t.
\begin{equation}
	<d\mathcal{S}(u_h^{(k)})[\delta u_h^{(k)}], v_h>_{V,V^*} = -< \mathcal{S}(u_h^{(k)}),v_h >_{V,V^*} \quad \forall v_h \in V_h,
\label{eq:linWeak}
\end{equation}
where $d\mathcal{S}(w)[\cdot]: V \rightarrow V^*$ is the Frech\'et derivative of $\mathcal{S}$ evaluated at $w$. Hence, the possibility to invert the matrix $J(\underline{u}^{(k)}) \in \R^{N_h \times N_h}$ is equivalent to the well-posedness of \eqref{eq:linWeak}, for which specific details are given in Appendix B. Through this strategy, it is possible to solve the direct problem \eqref{eq:prob} with the exact inclusion in order to obtain the boundary data, as well as the unperturbed problem \eqref{eq:unperturbed} required by Algorithm \ref{al:top}. Differently, the approximation of the adjoint problem \eqref{eq:adjoint}, which is a linear problem, immediately leads to the solution of a linear algebraic system, for which a well-posedness is guaranteed via the Lax-Milgram lemma.
	\par
Once $U$ and $W$ have been computed, the expression of the topological gradient $G(z)$ of the function $j$ is given by \eqref{eq:TopGrad}, where one has to exploit the \textit{a priori} knowledge on the shape of the inclusion to choose the proper polarization tensor. For example, whilw looking for circular-shaped inclusions, we obtain (see \eqref{eq:circle}):
\begin{equation}
	G(z) = \frac{2(1-k)}{1+k} |D| \nabla U(z) \cdot \nabla W(z) + U^3(z)W(z).
\label{eq:gradcircle}
\end{equation}
	Thanks to the discretization introduced, the approximation of the value of the topological gradient $G$ is known in each node of the triangulation $\mathcal{T}_h$. Hence, the search for its minimum point $\bar{z}$ is performed by a simple inspection between the nodal values of $G$. This of course requires the usage of a sufficiently fine mesh $\Tau_h$; otherwise, one may use any finite-dimensional optimization algorithm but entailing the evaluation of $G$ (and possibly its derivative, namely the Hessian of $j$) in points where the values of $U$ and $W$ have not been computed.

\subsection{Identification in presence of multiple measurements}

The proposed Algorithm \ref{al:top} allows to reconstruct the position of the exact inclusion with a single measurement of the boundary datum. However, it exploits a first-order expansion of the cost functional, and this can affect the precision of the reconstruction, due to the disregarded higher-order terms. In order to overcome this drawback, similarly to the approach proposed in \cite{art:cmm}, it is possible to take advantage of \textit{multiple measurements}. Consider $N^f >1$ different non zero forcing terms $f_i, i = 1,\ldots, N^f$, and suppose to know the respective boundary data $u_{meas,i}$, that is, the solutions of the direct problem \eqref{eq:prob} with the same inclusion $\omegaez$ and the corresponding source term $f_i$. Introduce the cost functional
\[
	J(\Omegae) = \sum_{i=1}^N \alpha_i J_i(\Omegae), \quad \text{where }  J_i(\Omegae) = \int_{\partial \Omega}(\uez - u_{meas,i})^2
\]
and $\{ \alpha_i \}_{i=1}^{N^f}$ is a set of weights such that 
\[
\alpha_i > 0, \textit{  } \sum_{i=1}^{N^f} \alpha_i = 1.
\]
Then, the minimum point $\bar{z}$ of the topological gradient $G(z) = \sum_i \alpha_i G_i(z)$ provides a better approximation of the inclusion's center than the minima $\bar{z}_i$ of each $G_i$, the topological gradient of $J_i$, filtering possible errors induced by the asymptotic analysis carried out on each functional $J_i$.
Hence, we perform a slight variation of Algorithm \ref{al:top}, in the case where multiple observations are available:
\begin{algorithm}
\begin{algorithmic}
\REQUIRE domain $\Omega$, forcing terms $f_i$, boundary data $u_{meas,i}$, $i=1,\ldots,N^f$
\ENSURE approximated center of the inclusion, $\bar{z}$

	\FOR{$i = 1,\ldots,N^f$}
		\STATE compute $U_i$ by solving \eqref{eq:unperturbed} with forcing term $f_i$;
			\STATE compute $W_i$ by solving \eqref{eq:adjoint} with Neumann datum $U_i - u_{meas,i}$;
			\STATE determine $G_i$ according to \eqref{eq:TopGrad};
		\ENDFOR
	\STATE compute $G(z) = \sum_{i=1}^{N^f} \alpha_i G_i(z)$;
	\STATE find $\bar{z}$ s.t. $G(\bar{z}) \leq G(z) \quad \forall z \in \Omega$.
\end{algorithmic}
\caption{Reconstruction of a single inclusion, many measurements}
\label{al:top2}
\end{algorithm}
\par
A possible way to define the weights $\{\alpha_1, \ldots, \alpha_{N^f} \}$ is to take
\begin{equation}
	\alpha_i = \frac{j_i(0)/|\min_{\Omega}G_i|}{\sum_{i = 1}^{N^f} j_i(0)/|\min_{\Omega}G_i|}, \quad i=1,\ldots,N^f,
\label{eq:alphai}
\end{equation}
which entails that the information provided by the topological gradient $G_i$ associated to a large value of the cost functional $j_i(0)$ is considered to carry more significative information than the one associated to a smaller value $G_j$, $ j \neq i$. We remark that this requires the calculation (for each $i=1,\ldots,N^f$) of $j_i(0) = \int_{\Gamma_i}{(u_{meas}^{\Gamma} - U)^2}$, which does not yield a significant computational cost, once the unperturbed problem \eqref{eq:unperturbed} has been solved.

\subsection{Partial measurements}
We describe another alternative to Algorithm \ref{al:top}, related to Remark \ref{rem:Gamma}, which is more interesting for the sake of application. Suppose to have information on the boundary potential on a portion $\Gamma$ of $\partial \Omega$ of the form:
\begin{equation}
	\Gamma = \bigcup_{i=1}^{N^{\Gamma}}\Gamma_i,
\label{eq:Gammai}
\end{equation} 
with $\Gamma_i$ open, connected, $|\Gamma_i| \neq 0$ for all $i = 1, \ldots, N^{\Gamma}$. This configuration can model the presence of $N^{\Gamma}$ different measurement devices on the boundary of the domain, on which we recover information of the potential $u_{meas}^{\Gamma}$. Moreover, as in Algorithm \ref{al:top2}, we set up the optimization of an averaged cost functional
\[
	J(\Omegae) = \sum_{i=1}^{N^\Gamma} \alpha_i J_i(\Omegae), \quad \text{where now} \quad J_i(\Omegae) = \int_{\Gamma_i}{(\uez - u_{meas})^2}
\]
is the cost functional related to the single portion $\Gamma_i$ of the boundary. This yields an alternative reconstruction procedure, involving multiple partial measurements obtained with the same forcing term $f$, as reported in Algorithm \ref{al:top3}.
\begin{algorithm}
\begin{algorithmic}
\REQUIRE domain $\Omega$, forcing term $f$, boundary data $u_{meas}^{\Gamma}$
\ENSURE approximated centre of the inclusion, $\bar{z}$
\STATE compute $U$ by solving \eqref{eq:unperturbed} with forcing term $f$;
	\FOR{$i = 1,\cdots,N^{\Gamma}$}
\STATE compute $W_i$ by solving \eqref{eq:adjoint} with Neumann datum $(U - u_{meas}^{\Gamma}) \chi_{\Gamma_i}$;
			\STATE determine $G_i$ according to \eqref{eq:TopGrad};
		\ENDFOR
	\STATE compute $G(z) = \sum_{i=1}^{N^f} \alpha_i G_i(z)$;
	\STATE find $\bar{z}$ s.t. $G(\bar{z}) \leq G(z) \quad \forall z \in \Omega$.
\end{algorithmic}
\caption{Reconstruction of a single inclusion, partial measurements}
\label{al:top3}
\end{algorithm}
\par
We remark that, since the formula \eqref{eq:TopGrad} for the topological gradient and the adjoint problem \eqref{eq:adjoint} are linear with respect to $W$, using homogeneous weights $\alpha_i = 1/N^{\Gamma}$ would be equivalent to rely on Algorithm \ref{al:top} with boundary data acquired on the whole $\Gamma$. Instead, the choice of weights proposed in \eqref{eq:alphai} allows to assing a better predictive value to the information derived by measurements on the portions $\Gamma_i$ which correspond to larger values of the cost functionals $j_i$. 

\section{Numerical Results}
\label{sec:results}

We now show some numerical results obtained by applying Algorithms \ref{al:top}, \ref{al:top2} and \ref{al:top3} in several benchmark cases. The goal is manifold:
\begin{enumerate}[i)]
	\item first of all (in section \ref{sec:standard}) we verify the effectiveness of the reconstruction, introducing a small inhomogeneity of circular shape in a two-dimensional domain $\Omega$, simulating the associated boundary potential $u_{meas}$ (or $u_{meas,i}$ for $i = 1, \ldots, N^f$, in the case of multiple measurements), and computing the distance between the center of the exact inclusion and the detected one;
	\item in section \ref{sec:non} we assess the feasibility of the algorithms when the shape of the inclusion to detect is unknown, and the reconstruction is performed with the polarization tensor of the circle. Indeed, we exploit hypothesis \eqref{eq:incl2} to assimilate an inclusion of small dimension to a circle, at a first approximation;
	\item in section \ref{sec:partial}, we test the reconstruction of circular inclusion in the case of measures performed on portions of the boundary, according to Algorithm \ref{al:top3}, considering a source term which is significative for the foreseen application;
	\item finally, in section \ref{sec:err}, we verify the stability of the procedure proposed in Algorithm \ref{al:top2} and \ref{al:top3} with respect to the presence of a measurement noise on the datum $u_{meas}$.
\end{enumerate}  
\par
In each experiment, the solution of the differential problems is performed via the Galerkin-Finite Element Method, as explained in section \ref{sec:algo}. In order to properly consider inhomogeneities of small dimensions ($diam(\Omega)/diam(\omegaez) \leq 0.05$), a triangulation of $\Omega$ made by a large number of elements ($\approx 30, 000$) is considered. Indeed, since the position of the inclusion is unknown, it is impossible to perform a local refinement of the mesh (which would increase the quality of the mesh without yielding a large cost due to the greater size of the linear system to solve). However, thanks to the \textit{one-shot} approach, the reconstruction procedure is not expensive at all, and the overall computational time is in general of the order of the minute \footnote{We performed the simulations with a laptop with CPU frequency of $2.10 GHz$, RAM $8GB$}(e.g. when applying Algorithm \ref{al:top2} with $N^f = 2$ sources on a mesh of about $30, 000$ elements, the computational time is about $10''$, whereas Algorithm \ref{al:top3} on a mesh of about $100, 000$ elements with $N^{\Gamma} = 16$ takes almost $100''$).
\par
In the case of multiple observations, we use the source terms proposed in \cite{art:cmm} for the linear problem: $f_1(x,y) = x$, $f_2(x,y) = y$, $f_3(x,y) = xy$, $f_4(x,y) = 0.5(x^2-y^2)$, for all $(x,y) \in \Omega$. This allows to assess the effectiveness of our reconstruction procedure in a benchmark case which is similar to the ones proposed in literature for the linear problem. Similarly e.g. to the results shown in \cite{art:cmm}, also in our case it is not necessary to use more then $N^f=4$ forcing terms: in particular, each simulation is carried out with $N^f = 1, \cdots, 4$ and, if the reconstructed position does not undergo a significant change after the introduction of a new measurement, the procedure is stopped. The weights $\alpha_i$ in the averaged functional are chosen as in \eqref{eq:alphai}. When testing Algorithm \ref{al:top3}, instead, the chosen source term is inspired by the foreseen application.  

\subsection{Circular-shaped inclusion detection}
\label{sec:standard}
We report the numerical results obtained for the detection of the center of small circular inclusions in different positions of the domain $\Omega = B(0,1)$. In Figure \ref{fig:risCerchio} we plot the topological gradient $G(z)$, superimposing its negative minimum (white cross) and the boundary of the exact inclusion (white circle of radius 0.04). The minima detected in all the cases are reported in Table \ref{tab:risCerchio}, where we also compute the Euclidean distance between the reconstructed position and the exact inclusion's center. In the column $N_f$ we specify how many measurements were needed for finding the minima.
\begin{figure}[h!]
		\centering
			\subfloat[Real inclusion: (0,0.1)]{
		    	\includegraphics[width=0.5\textwidth]{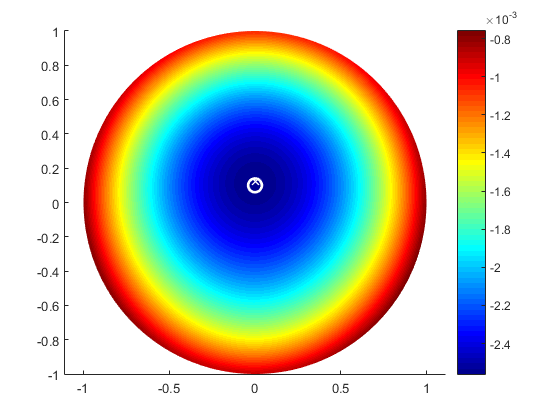}
			}
			\subfloat[Real inclusion: (0.4,0.3)]{
		    	\includegraphics[width=0.5\textwidth]{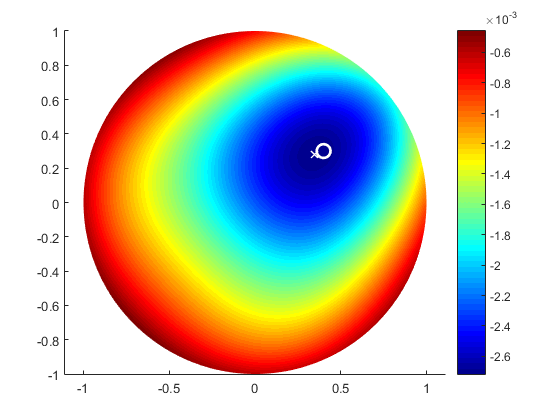}
			}
			\\
			\subfloat[Real inclusion: (-0.6,0)]{
		    	\includegraphics[width=0.5\textwidth]{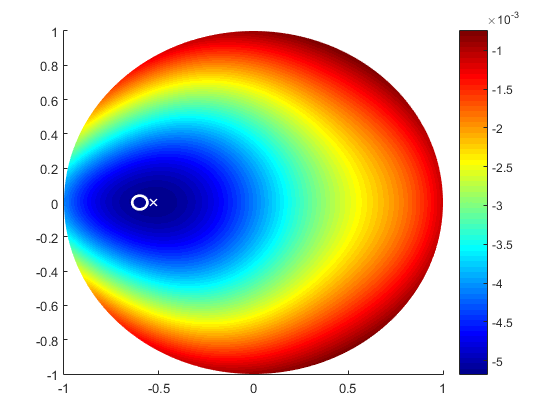}
			}
			\subfloat[Real inclusion: (0.4,-0.5)]{
		    	\includegraphics[width=0.5\textwidth]{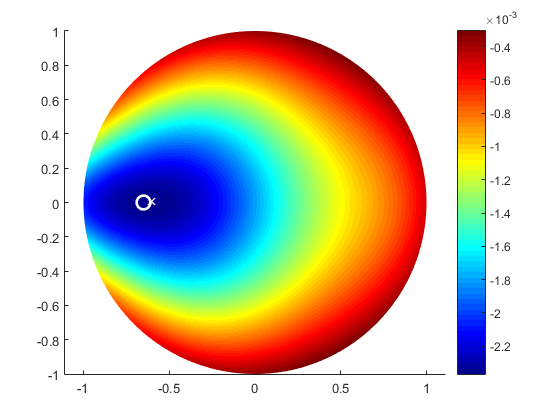}
			}
			\caption[Detection of a circular-shaped inclusion]{Detection of a circular-shaped inclusion}
		\label{fig:risCerchio}
\end{figure}
	
\begin{table}[h!]
\centering
\begin{tabular}[t]{|c|c|c|c|}
\hline
Real inclusion & Detected inclusion & $N_f$ & Error \\
\hline
$(0, 0.1)$ & (0.014,0.106) & 2 & 0.016 \\
$(0.4, 0.3)$ & (0.363, 0.296) & 3 & 0.037  \\
$(-0.65, 0)$ & (-0.603,0.005) & 2 & 0.047 \\
$(0.4, -0.5)$ & (0.431,-0.500) & 3 & 0.031 \\
\hline
\end{tabular}
\caption{Detection of a circular-shaped inclusion: results}
\label{tab:risCerchio}
\end{table}

	
\par
We observe that the algorithm detects the position of the inclusion with an average error of $0.04$ in Euclidean norm, which is comparable to the size of the inclusion itself. Moreover, significant differences can be observed according to the position of the inclusion. Except for the inclusions located very close to the center, it holds that the closer the inclusion to the boundary, the more accurate the reconstruction. However, if the inclusion is eccessively close to boundary, the minimum is detected along the boundary itself, which is of course in contrast with hypothesis \eqref{eq:incl2}.


\subsection{Inclusion of unknown shape}
\label{sec:non}
We can also show that the proposed algorithm is able to detect inclusions of unknown shape. In particular, we are interested to show that formula \eqref{eq:gradcircle}, related to the circular-shaped inclusions, can be successfully applied to detect (at some extent) inclusions whose shape is unknown. In a first case, we reconstruct the center of an inclusion of elliptic shape both with the exact polarization tensor \eqref{eq:ellipse} and with the one related to the circular shape, given \eqref{eq:circle}: see Figure \ref{fig:ellisse1} and Table \ref{tab:ellisse}. Then, we test the identification of inclusions with more involved shapes, for which the polarization tensor is unknown. We report the qualitative results of the detection of an L-shaped inclusion obtained by means of the polarization tensor of the circle (see Figure \ref{fig:Lshape1}).
\begin{figure}[h!]
\vspace{-0.25cm}
		\centering
			\subfloat[Exact polarization tensor]{
		    	\includegraphics[width=0.5\textwidth]{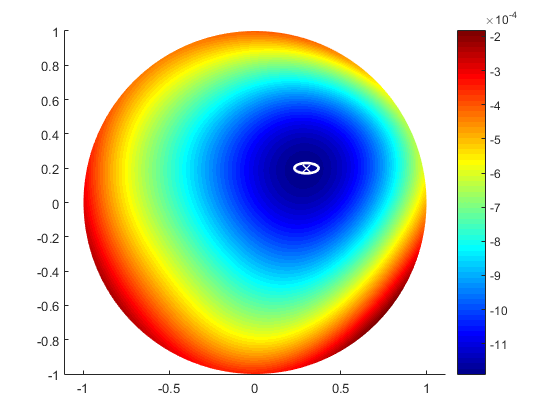}
			}
			\subfloat[Circular-shape tensor]{
		    	\includegraphics[width=0.5\textwidth]{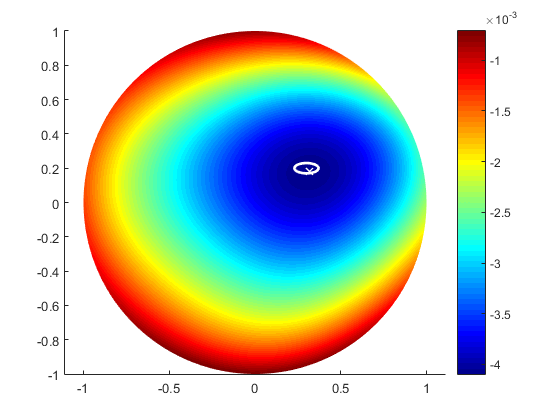}
			}
			\caption[Elliptic-shaped inclusion detection with different tensors]{Elliptic-shaped inclusion detection with different tensors}
		\label{fig:ellisse1}
	\end{figure}
\begin{table}[h!]
\centering
\begin{tabular}[t]{|c|c|c|c|c|}
\hline
$\mathcal{M}$ & Real inclusion center & x and y real semi-axis & Detected center & Error \\
\hline
Ellipse & (0.3, 0.2) & (0.07, 0.03) & (0.302, 0.196) & 0.005 \\
Circle & (0.3, 0.2) & (0.07, 0.03) & (0.320, 0.181) & 0.028 \\
Ellipse & (0.5, 0) & (0.04, 0.02) & (0.487, -0.013) & 0.018 \\
Circle & (0.5, 0) & (0.04, 0.02) & (0.549, 0.009) & 0.050 \\
\hline
\end{tabular}
\caption{Elliptic-shaped inclusion detection with different tensors: results}
\label{tab:ellisse}
\end{table}
\begin{figure}[h!]
		\centering
			\subfloat[Inclusion]{
		    	\includegraphics[width=0.5\textwidth]{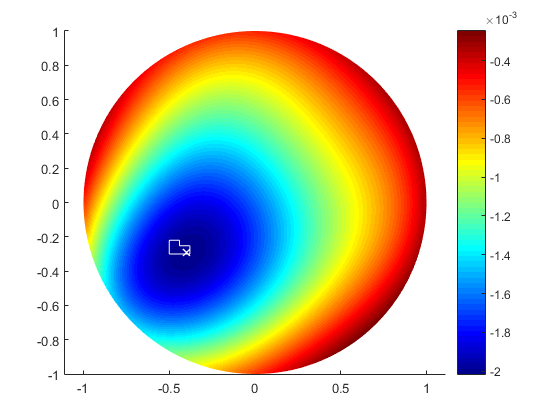}
			}
			\subfloat[Topological gradient]{
		    	\includegraphics[width=0.5\textwidth]{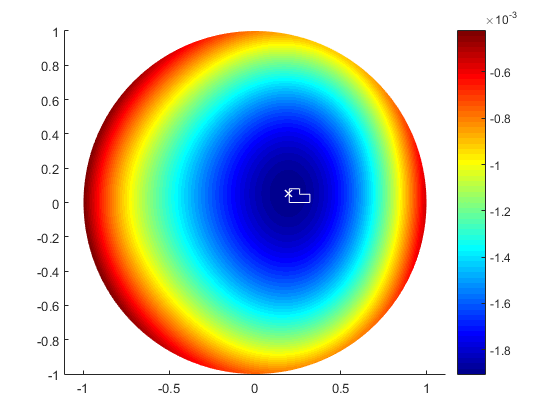}
			}
			\caption[L-shaped inclusion detection using the tensor corresponding to the circular shape]{L-shaped inclusion detection using the tensor corresponding to the circular shape}
		\label{fig:Lshape1}
	\end{figure}

In the case of inclusions of elliptic shape, we remark that the reconstruction error using the polarization tensor of the circle is actually higher, but comparable to the one with the correct tensor. Hence, the proposed \textit{one-shot} algorithm can be used when dealing with the reconstruction of inclusions of unknown shape and small dimensions, providing a first approximation of the center by using the topological gradient associated to the polarization tensor of the circle. This can consist in an initial guess for an iterative scheme, based on the level-set tecnique or on the evaluation of the shape gradient of the functional $J$ if a shape optimization procedure is exploited for the sake of the complete reconstruction of the geometry of the inclusion.

\subsection{Partial measurements}
\label{sec:partial}

In this section, we test Algorithm \ref{al:top3} for the reconstruction of a small circular inhomogeneity in the domain $\Omega = B(0,1)$ using measurements of the potential on a portion $\Gamma$ of the boundary. In particular, we consider a source term $f(x,y) = 1-exp(-r_s^2/((x-x_s)^2+(y-y_s)^2))$, which attains its maximum value in $(x_s,y_s) \in \Omega$ and exponentially decays outside a circular neighborhood of radius $r_s$, approximating the electrical stimulus originated in a specific region. Moreover, the region $\Gamma$ is of the form prescribed by \eqref{eq:Gammai}, where $\Gamma_i$ are equivalent arcs of lenght $2\pi \ell$.
\par
We report some results of the reconstruction algorithm in the case where the exact inclusion has center $(0.5,0.4)$, the forcing stimulus is centered in (0,0) with radius $r_s = 0.3$, $\ell = 1/48$ and we consider different numbers of portions $N^{\Gamma}$: see Table \ref{tab:part} for the quantitative results and Figure \ref{fig:part}, where we marked $\Gamma$ with a thick black line.
\begin{table}[h!]
\centering
\begin{tabular}[t]{|c|c|c|}
\hline
$N^{\Gamma}$ & Detected inclusion &  Error \\
\hline
 8 & (0.629, 0.530) & 0.183 \\
12 & (0.482, 0.346) & 0.057 \\
16 & (0.508, 0.423) & 0.025 \\
24 & (0.489,-0.398) & 0.011 \\
\hline
\end{tabular}
\caption{Reconstruction with partial measurements: results}
\label{tab:part}
\end{table}

\begin{figure}[h!]
		\centering
			\subfloat[$N^{\Gamma} = 8$]{
		    	\includegraphics[width=0.5\textwidth]{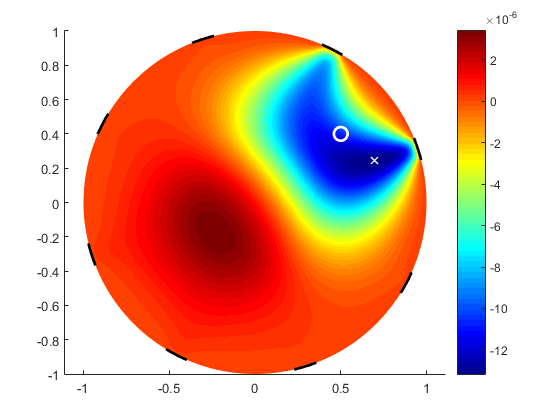}
			}
			\subfloat[$N^{\Gamma} = 12$]{
		    	\includegraphics[width=0.5\textwidth]{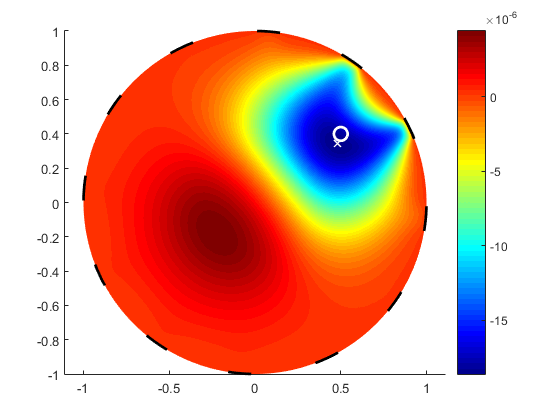}
			} 
			\\
			\subfloat[$N^{\Gamma} = 16$]{
		    	\includegraphics[width=0.5\textwidth]{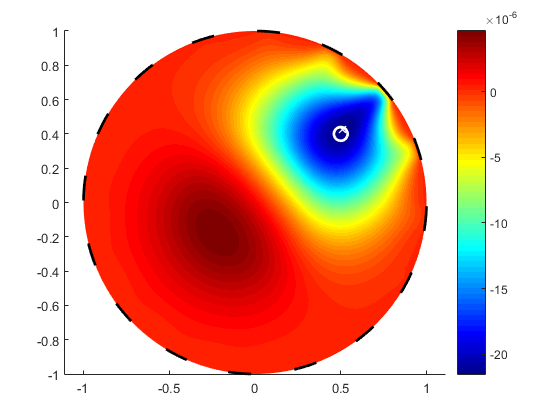}
			}
			\subfloat[$N^{\Gamma} = 24$]{
		    	\includegraphics[width=0.5\textwidth]{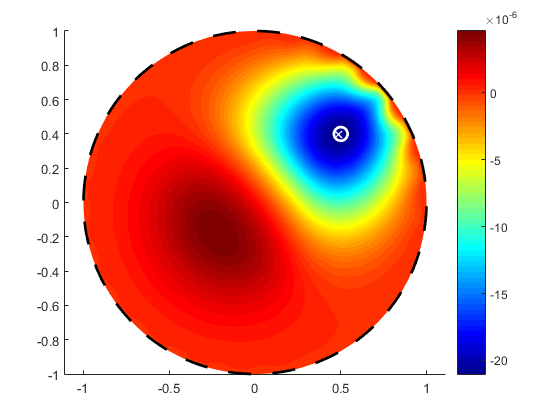}
			}
		\caption{Reconstruction with partial measurements: results}
		\label{fig:part}
	\end{figure}

\subsection{Effect of experimental noise}
\label{sec:err}
In this last subsection, we show the stability of Algorithms \ref{al:top2} and \ref{al:top3} with respect to possible experimental or measurement noise on the boundary data. We perturb the value of the exact solution computed on the boundary up to a fixed percentage $p$ ($\tilde{u}_{meas}(x) = u_{meas}(x)(1-p/2 + rand(x)p)$, where $rand(x)$ is a random number between $0$ and $1$ for each $x \in \Omega$), assessing the performances of the reconstruction procedures. Some results in the case of the reconstruction of circular-shaped inclusions with multiple measurements are reported in Figure \ref{fig:errStat} and in Table \ref{tab:errStat}. We conducted the simulation 100 times with different realizations of the random experimental noise, reporting the average error obtained; the cases where the inclusion was detected on the boundary (and thus the reconstruction fails) are not taken into account, but are reported in Table \ref{tab:errStat} as ``failure'' cases. In Table \ref{tab:errStat2} and in Figure \ref{fig:errStat2}, instead, we report the results obtained in the case of partial measurements affected by noise, in the case of $N^{\Gamma} = 12, 16, 24$ portions of the boundary of length $2\pi \ell$, $\ell = 1/48$. Each simulation was conducted 20 times with different random errors; the average results are then reported.
\begin{table}[h!]
\centering
\begin{tabular}[t]{|c|c|c|c|}
\hline
Percentage & Real inclusion's center & Failure & Mean error \\
\hline
1\%  & (0.2,-0.2) & 0\% & 0.026\\
2\%  & (0.2,-0.2) & 0\% & 0.034\\
5\%  & (0.2,-0.2) & 0\% & 0.082\\
10\% & (0.2,-0.2) & 33\% & 0.212\\
\hline
\end{tabular}
\caption{Results under experimental errors: multiple measurements}
\label{tab:errStat}
\end{table}

\begin{figure}[h!]
\vspace{-0.25cm}
		\centering
			\subfloat[Error: 1\%]{
		    	\includegraphics[width=0.5\textwidth]{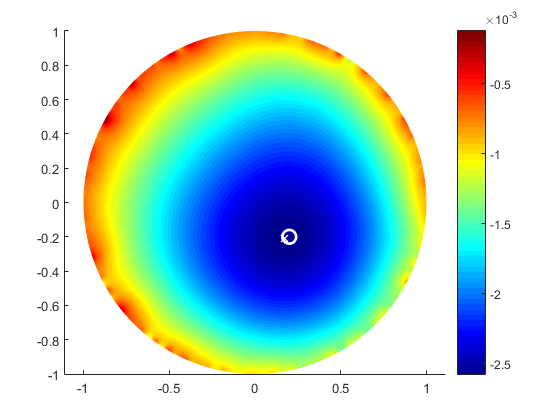}
			}
			\subfloat[Error: 2\%]{
		    	\includegraphics[width=0.5\textwidth]{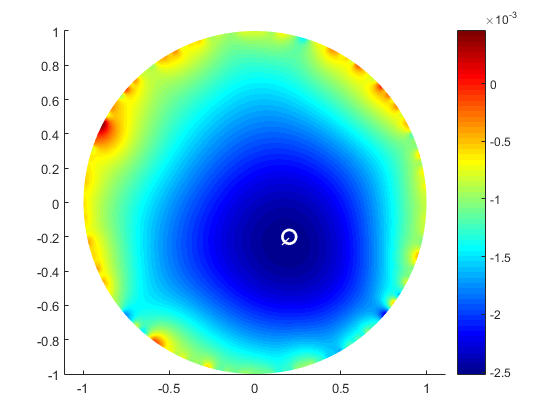}
			}
			\\
			\subfloat[Error: 5\%]{
		    	\includegraphics[width=0.5\textwidth]{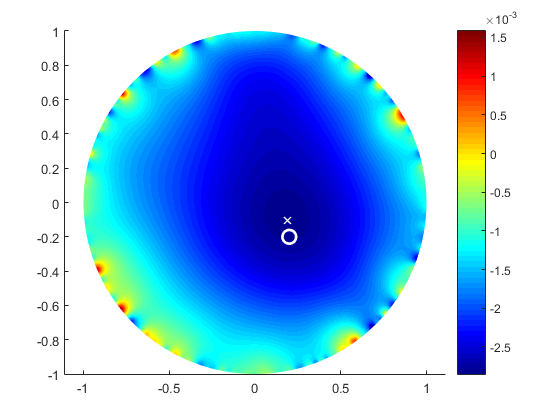}
			}
			\subfloat[Error: 10\%]{
		    	\includegraphics[width=0.5\textwidth]{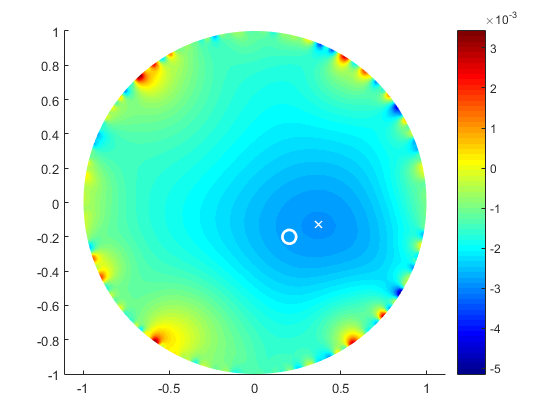}
			}
			\caption[Results under experimental errors: multiple measurements]{Results under experimental errors: multiple measurements}
		\label{fig:errStat}
	\end{figure}
	
	\begin{table}[h!]
\centering
\begin{tabular}[t]{|c|c|c|c|}
\hline
 & $N^{\Gamma} = 12$ & $N^{\Gamma} = 16$ & $N^{\Gamma} = 24$ \\
\hline
$p = 1\%$  	& 0.087 & 0.030 & 0.021 \\
$p = 2\%$   & 0.103 & 0.087 & 0.040 \\
$p = 5\%$   & 0.254 & 0.170 & 0.138 \\
\hline
\end{tabular}
\caption{Results under experimental errors: partial measurements}
\label{tab:errStat2}
\end{table}

\begin{figure}[h!]
		\centering
			\subfloat[$p$ = 1\%, $N^{\Gamma}$ = 12]{
		    	\includegraphics[width=0.32\textwidth]{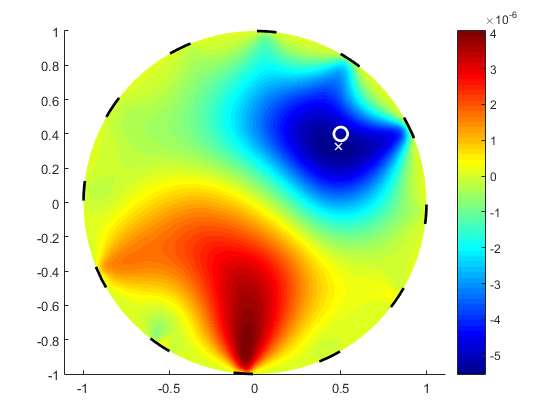}
			}
			\subfloat[$p$ = 1\%, $N^{\Gamma}$ = 16]{
		    	\includegraphics[width=0.32\textwidth]{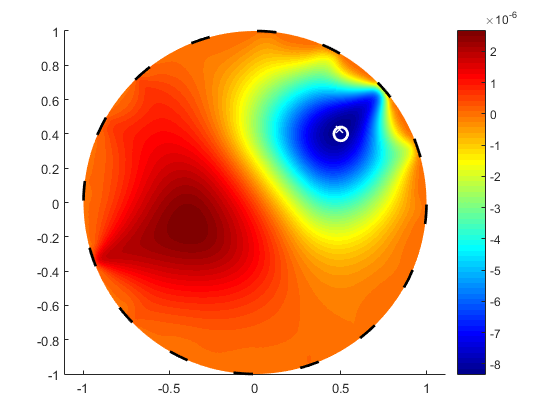}
			}
			\subfloat[$p$ = 1\%, $N^{\Gamma}$ = 24]{
		    	\includegraphics[width=0.32\textwidth]{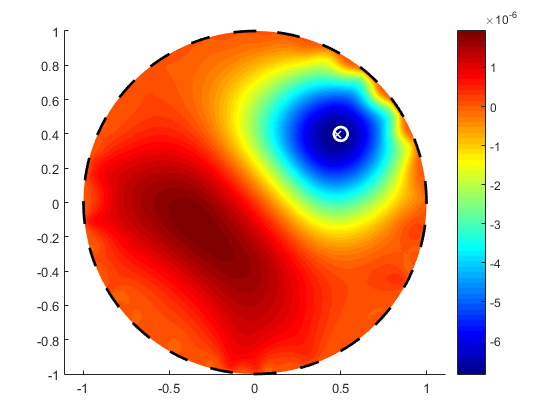}
			}
			\\\subfloat[$p$ = 2\%, $N^{\Gamma}$ = 12]{
		    	\includegraphics[width=0.32\textwidth]{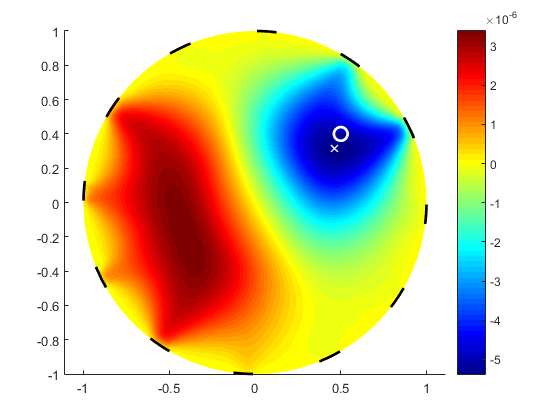}
			}
			\subfloat[$p$ = 2\%, $N^{\Gamma}$ = 16]{
		    	\includegraphics[width=0.32\textwidth]{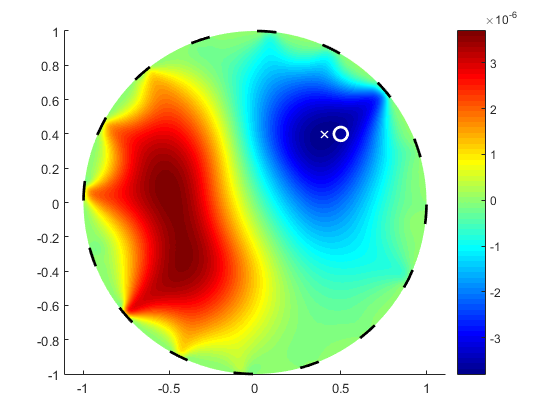}
			}
			\subfloat[$p$ = 2\%, $N^{\Gamma}$ = 24]{
		    	\includegraphics[width=0.32\textwidth]{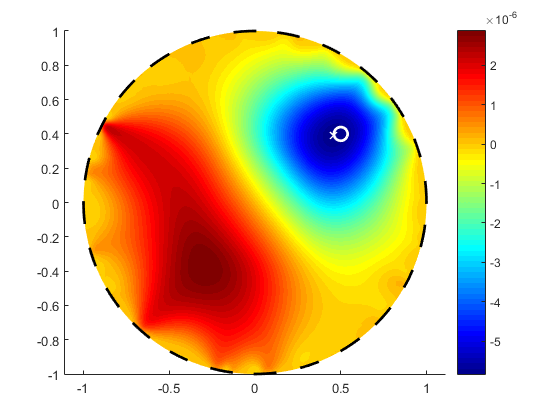}
			}
			\\ \subfloat[$p$ = 5\%, $N^{\Gamma}$ = 12]{
		    	\includegraphics[width=0.32\textwidth]{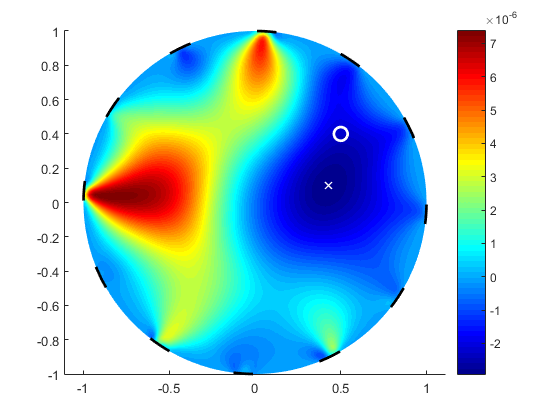}
			}
			\subfloat[$p$ = 5\%, $N^{\Gamma}$ = 16]{
		    	\includegraphics[width=0.32\textwidth]{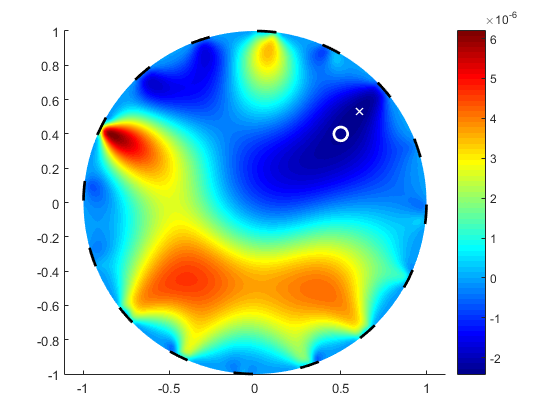}
			}
			\subfloat[$p$ = 5\%, $N^{\Gamma}$ = 24]{
		    	\includegraphics[width=0.32\textwidth]{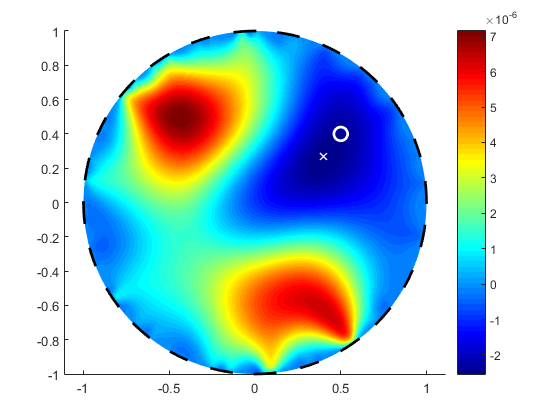}
			}
			\caption[Results under experimental errors: partial measurements]{Results under experimental errors: partial measurements}
		\label{fig:errStat2}
	\end{figure}
\par
We point in the first case, i.e. the reconstruction with many measurements, that the detected position is stable under small perturbations of the data (namely, the error in reconstruction grows almost linearly with respect to the experimental noise), but there exists a threshold value (e.g., in the first case, below 10\%) above which the information provided by the topological gradient is too noisy to be meaningful for the sake of reconstruction. The same happens in the second case, for each number $N^{\Gamma}$ of portions $\Gamma_i$ considered. This suggests the possibility to prove a local stability result for the reconstruction problem in the same way as shown in \cite{art:cfmv} for the (linear) inverse conductivity problem in the case of inclusions of small dimensions: further work is ongoing in this respect.

\section*{Conclusions}
In this work, a new procedure for a nonlinear reconstruction problem has been described, under the assumption that the inclusion to be detected has small dimensions and fixed shape. An equivalent optimization problem has been introduced, which is solved by means of the topological gradient, allowing to define a \textit{one-shot} reconstruction algorithm. This procedure requires the solution of two boundary value problems, the unperturbed one and the adjoint problem. Some extensions of the algorithm have been proposed in order to improve its accuracy, $(i)$ taking advantage of more than one measurements, or $(ii)$ properly exploiting the information coming from different portions of the boundary. These procedures have been tested for the reconstruction of inclusions of circular shape and of \textit{a priori} unknown shape, assessing the accuracy and the stability of the solution when the data are affected by possible experimental noise.
\par
The algorithms have proved to be effective in the reconstruction, yielding the exact position of the inclusion up to an error of $5\%$ by considering at most $N^f = 4$ or $N^{\Gamma} = 24$ measurements. The analysis presented in this work suggests several extensions. In particular, the possibility to have a first reasonable guess of the position of an inclusion of unknown shape is a first, encouraging step towards the coupling of this strategy with an iterative scheme, based e.g. on the shape gradient of a suitable cost function, in order to fully reconstruct the geometry of the inhomogeneity. Moreover, the robustness with respect to the statistical errors suggests the stability analysis for the reconstructed position with respect to a small perturbation of the boundary data. This work can also pave the way to account for several extensions in order to tackle the complexity of the physical model we are interested in for the sake of application in cardiac electrophysiology.

\newpage
%
%

\section*{Appendix A. Details on the forcing terms}
\label{app:f}

Conforming to the hypothesis introduced in \cite{art:bcmp}, the choice of the forcing term $f$ must satisfy the condition: 
\begin{equation}
	\exists m > 0 \text{ s.t. } f(x)\geq m \text{ } \forall x \in \Omega.
\label{eq:m}
\end{equation} 
This restriction can be avoided, as it is possible to weaken \eqref{eq:m} by only assuming that:
\begin{equation}
 f \textit{ does not identically vanish, i.e.} \quad \norm{L^p(\Omega)}{f} \neq 0
\label{eq:vanish}
\end{equation} 
Indeed, in the proof delivered in \cite{art:bcmp}, that hypothesis is only needed for the preliminary estimate proved in Theorem 4.2, namely 
\begin{equation}
	\norm{H^1(\Omega)}{\ue-U} \leq C|\omegae|^{1/2}.
\label{eq:bcmp_estimate}
\end{equation}
In particular, hypothesis \eqref{eq:m} is required in order to prove that, in order to obtain \eqref{eq:bcmp_estimate}, the following estimate from below holds: 
\[
	\exists C = C(\abs{\Omega \setminus \omegae},m) > 0 \textit{ s.t } \int_{\Omega \setminus \omegae}{q_{\varepsilon}} > C, \quad \textit{ where } q_{\varepsilon} = U^2 + U \ue + \ue^2.
\]   
Hence, we can substitute it as follow:
	\textit{if $f \in L^p(\Omega)$ satisfies \eqref{eq:vanish}, then $\exists C = C(\abs{\Omega \setminus \omegae},f) > 0$ s.t. $\int_{\Omega \setminus \omegae}{q_{\varepsilon}} > C$}.
Indeed, if $f$ satisfies \eqref{eq:vanish}, then also $U$ cannot identically vanish, otherwise it could not solve \eqref{eq:unperturbed}. Hence, denoting $M = \norm{\infty}{U}$, we can ensure that $M > 0$. Consider
\[
\Omega_0 = \{x \in \Omega: |U(x)| \leq M/2\} \text{ and } \Omega_1 = \{x \in \Omega: |U(x)|> M/2\}:
\]
as $U$ is continuous in $\Omega$ (see Proposition 4.2 in \cite{art:bcmp}, indipendent of hypothesis \eqref{eq:m}), we conclude that $|\Omega_1| > 0 $. We introduce $\tilde{U}$ defined as follows:
\[
	\tilde{U}(x) = \left\{ 
	\begin{aligned}
		M/2 \quad &x \in \Omega_1 \\
		0 \quad &x \in \Omega_0.
	\end{aligned}
	\right.
\]
By definition, $U^2(x) \geq \tilde{U}^2(x) \quad \forall x \in \Omega$; hence, we obtain:
\[
	\int_{\Omega \setminus \omegae}{q_{\varepsilon}} \geq \int_{\Omega \setminus \omegae}{\frac{3}{4} U^2} \geq\int_{\Omega \setminus \omegae}{\frac{3}{4} \tilde{U}^2} \geq \tilde{C} M^2 |\Omega_1| = C > 0.
\]
We remark that hypothesis \eqref{eq:m} allows to write an estimate for the quantity $q_{\varepsilon}$ (and therefore for $\norm{H^1(\Omega)}{\ue-U}$) which is independent of the choice of the forcing term $f$, whereas the weakened one, \eqref{eq:vanish}, entails an estimate which depends on $M$ and $\Omega_1$ and ultimately on the choice of $f$. This allows to use the main theoretical results (Theorem \ref{th:sviluppo} and consequences) in the proposed weaker hypothesis, and does not compromise the effectiveness of such estimate in the case of our application.

\section*{Appendix B. On the well-posedness of the linearization involved in the numerical approximation of the direct problem}

\label{app:New}
As explained in section \ref{sec:algo}, the numerical approximation of the direct problem requires to exploit the iterative Newton algorithm for nonlinear systems. The linearized problem \eqref{eq:linAlg}, which we have to solve at each step, is the algebraic formulation of the problem \eqref{eq:linWeak}, which explicity reads: find $u_h \in V_h$ such that
\begin{equation}
	\begin{aligned}
		\int_{\Omega}{k(x) \nabla \delta u_h \cdot \nabla v_h }  &+ \int_{\Omega \setminus \omega}{3 \left(u_h^{(k)} \right)^2 \delta u_h v_h} =  \\
		= \int_{\Omega}{f v_h} -& \int_{\Omega}{k \nabla u_h^{(k)}  \cdot \nabla v } - \int_{\Omega \setminus \omega}{\left(u_h^{(k)}\right)^3 v_h} \quad \forall v_h \in V_h.
	\end{aligned}
\label{eq:newtonhw}
\end{equation}

Such a problem is indeed well-posed, according to the following result:
\begin{proposition}
	If $u_h^{(0)} \in V_h \cap L^{\infty}(\Omega)$, problem \eqref{eq:newtonhw} admits an unique solution $u_h^{(k)}$ in $V_h \subset V=H^1(\Omega)$ for every $k$. Moreover, 
\[
	\exists C_k > 0 \text{ s.t. } \norm{H^1(\Omega)}{u_h^{(k)}} \leq C_k \left(\norm{H^{-1}(\Omega)}{f} + \norm{H^1(\Omega)}{u_h^{(0)}}^3 \right).
\]
\label{th:linearizedh}
\end{proposition}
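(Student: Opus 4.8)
The plan is to argue by induction on the Newton index, recasting each linearised step as a coercive variational problem on the finite-dimensional space $V_h$ and invoking the Lax--Milgram lemma. Fix an index and suppose the current iterate $u_h^{(k)}\in V_h$ is known; since every element of $V_h$ is continuous on $\bar\Omega$, it lies in $L^\infty(\Omega)$, so the coefficient $3(u_h^{(k)})^2\chi_{\Omega\setminus\omega}$ is bounded and nonnegative. Problem \eqref{eq:newtonhw} is then of the form $a_k(\delta u_h,v_h)=\ell_k(v_h)$ for all $v_h\in V_h$, with
\[
a_k(\phi,v)=\int_\Omega k(x)\nabla\phi\cdot\nabla v+\int_{\Omega\setminus\omega}3\left(u_h^{(k)}\right)^2\phi\,v
\]
and $\ell_k$ the right-hand side of \eqref{eq:newtonhw}. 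First I would verify continuity: $a_k$ is continuous on $H^1(\Omega)$ because $k\in L^\infty$ and $(u_h^{(k)})^2\in L^\infty$, while $\ell_k$ is bounded using $f\in L^2(\Omega)\subset H^{-1}(\Omega)$, $u_h^{(k)}\in H^1(\Omega)\cap L^\infty(\Omega)$, and the two-dimensional embedding $H^1(\Omega)\hookrightarrow L^4(\Omega)$ for the cubic term.

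The heart of the matter is coercivity. Since \eqref{eq:prob} carries homogeneous Neumann data, the diffusion part of $a_k$ alone is not coercive on $H^1(\Omega)$ -- constants belong to its kernel -- so the zeroth-order term must restore control of $\norm{L^2(\Omega)}{v}$. One has $a_k(v,v)=\int_\Omega k|\nabla v|^2+\int_{\Omega\setminus\omega}3(u_h^{(k)})^2v^2\ge 0$, with equality forcing $\nabla v\equiv 0$, hence $v$ constant on the connected domain $\Omega$, together with $\int_{\Omega\setminus\omega}(u_h^{(k)})^2v^2=0$; the latter kills the constant as soon as $u_h^{(k)}$ does not vanish identically on $\Omega\setminus\omega$. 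Thus $a_k$ is positive definite, and being defined on the finite-dimensional $V_h$ it is automatically coercive there; for a mesh-independent coercivity constant I would instead invoke a generalized Poincaré inequality, $\norm{L^2(\Omega)}{v}^2\le C(\norm{L^2(\Omega)}{\nabla v}^2+\int_E v^2)$ on a set $E\subset\Omega\setminus\omega$ of positive measure where $(u_h^{(k)})^2\ge c_0>0$, proved by the usual Rellich compactness-contradiction argument. Lax--Milgram then yields a unique $\delta u_h^{(k)}\in V_h$, hence a unique $u_h^{(k+1)}=u_h^{(k)}+\delta u_h^{(k)}\in V_h$, and the induction on well-posedness closes. \emph{The main obstacle is precisely the non-degeneracy} $u_h^{(k)}\not\equiv 0$ on $\Omega\setminus\omega$ that makes the reaction term effective: I would justify that it propagates along the iteration from $f\not\equiv 0$, in the same spirit as the argument in Appendix A showing that the unperturbed potential $U$ cannot vanish identically.

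Finally, for the a priori estimate I would eliminate the increment by rewriting the update as a single linear problem for $u_h^{(k+1)}$; substituting $\delta u_h^{(k)}=u_h^{(k+1)}-u_h^{(k)}$ into \eqref{eq:newtonhw} and cancelling the common diffusion term gives
\[
\int_\Omega k\nabla u_h^{(k+1)}\cdot\nabla v+3\int_{\Omega\setminus\omega}(u_h^{(k)})^2u_h^{(k+1)}v=\int_\Omega fv+2\int_{\Omega\setminus\omega}(u_h^{(k)})^3v.
\]
Testing with $v=u_h^{(k+1)}$, the left-hand side is bounded below by $\alpha_k\norm{H^1(\Omega)}{u_h^{(k+1)}}^2$ through the coercivity just established; on the right I would estimate $\int_\Omega fu_h^{(k+1)}\le\norm{H^{-1}(\Omega)}{f}\norm{H^1(\Omega)}{u_h^{(k+1)}}$ and, via Hölder together with $H^1\hookrightarrow L^4$, bound the cubic source by $C\norm{H^1(\Omega)}{u_h^{(k)}}^3\norm{H^1(\Omega)}{u_h^{(k+1)}}$. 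Dividing by $\norm{H^1(\Omega)}{u_h^{(k+1)}}$ leaves the one-step bound $\norm{H^1(\Omega)}{u_h^{(k+1)}}\le\alpha_k^{-1}(\norm{H^{-1}(\Omega)}{f}+C\norm{H^1(\Omega)}{u_h^{(k)}}^3)$; unwinding the recursion down to $u_h^{(0)}$ and absorbing all iteration-dependent factors into a single constant $C_k$ yields the advertised dependence, linear in $\norm{H^{-1}(\Omega)}{f}$ and governed by the cubic nonlinearity in $\norm{H^1(\Omega)}{u_h^{(0)}}$.
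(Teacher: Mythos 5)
Your route is genuinely different from the paper's. You argue via Lax--Milgram, getting coercivity either from positive definiteness on the finite-dimensional $V_h$ or from a generalized Poincar\'e inequality on a set where $(u_h^{(k)})^2\geq c_0>0$; the paper instead proves only continuity plus a G\aa rding-type \emph{weak} coercivity with constant $k_{in}$ (adding $k_{in}\|u_h\|^2_{L^2(\Omega)}$), and then invokes the Ne\c cas theorem / Fredholm alternative, reducing well-posedness to uniqueness for the homogeneous problem \eqref{eq:omogeneo}, which it establishes by testing with $w$ and a H\"older-regularity argument (Theorem 8.24 of Gilbarg--Trudinger). Two of your simplifications are real improvements at the discrete level: observing that $V_h\subset C(\bar\Omega)$ makes every iterate automatically $L^\infty$ short-circuits the paper's entire $C^{0,\alpha}$ bootstrap used to propagate boundedness along the iteration, and your rewriting of the Newton step as a single linear problem for $u_h^{(k+1)}$, tested with $u_h^{(k+1)}$, gives a more explicit one-step energy estimate than the paper's appeal to the Ne\c cas stability constant.

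However, there is a genuine gap exactly where you flag ``the main obstacle'': the non-degeneracy $u_h^{(k)}\not\equiv 0$ on $\Omega\setminus\omega$ is \emph{not} proved, and your proposed fix --- propagation from $f\not\equiv 0$ ``in the same spirit as Appendix A'' --- would fail. Appendix A exploits that $U$ solves the semilinear equation \eqref{eq:unperturbed}, so $U\equiv 0$ would contradict $f\not\equiv 0$; Newton iterates satisfy no such equation, and nothing prevents an iterate from vanishing on $\Omega\setminus\omega$. Worse, the hypothesis of the proposition admits $u_h^{(0)}\equiv 0$, in which case your form $a_0$ contains the constants in its kernel and the first linearized problem is a singular pure-Neumann problem whose right-hand side reduces to $\int_\Omega f v_h$, incompatible whenever $\int_\Omega f\neq 0$: positive definiteness genuinely fails there, not merely your proof of it. (In fairness, the paper's proof has the same soft spot: in deducing \eqref{eq:w} it silently passes from $\int_{\Omega\setminus\omega}(u_h^{(0)})^2w^2=0$ to $w=0$ on $\Omega\setminus\omega$, which likewise needs $u_h^{(0)}$ nonvanishing a.e.\ there.) A secondary caveat: unwinding your cubic one-step bound produces powers $\|u_h^{(0)}\|_{H^1(\Omega)}^{3^k}$, so the advertised linear-plus-cubic form for general $k$ cannot be reached by ``absorbing iteration-dependent factors into $C_k$'' unless $C_k$ is allowed to depend on the data --- though the paper's closing ``by induction'' glosses over precisely the same point, so this is a defect of the statement rather than of your argument specifically.
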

\begin{proof}[Proof]
Consider the first iteration: for a fixed initial point $u_h^{(0)}$ in $V_h \cap L^{\infty}(\Omega)$, the linear operator $-s(u_h^{(0)},\cdot) = -< \mathcal{S}(u_h^{(0)}), \cdot >_{V,V^*}$ and the bilinear form $ds[u_h^{(0)}](\cdot, \cdot) = <d_{u_h^{(0)}}\mathcal{S}\cdot, \cdot>_{V,V^*} $ are continuous: for all $u_h, v_h \in V_h$, it holds:
\[
\begin{aligned}
\abs{ds[u_h^{(0)}](u_h,v_h)} &\leq k_{out} \norm{L^2(\Omega)}{\nabla u_h} \norm{L^2(\Omega)}{\nabla v_h} + 3 \norm{L^{\infty}(\Omega)}{u^{(0)}}^2 \norm{L^2(\Omega)}{u_h} \norm{L^2(\Omega)}{v_h} \\
 &\leq max \left\{k_{out}, 3 \norm{L^{\infty}(\Omega)}{u_h^{(0)}}^2 \right\}\norm{H^1(\Omega)}{u_h} \norm{H^1(\Omega)}{v_h}; \\
\abs{s(u_h^{(0)},v_h)} & \leq \norm{L^2(\Omega)}{f} \norm{L^2(\Omega)}{v_h} + k_{out} \norm{L^2(\Omega)}{\nabla v_h}\norm{L^2(\Omega)}{\nabla u_h^{(0)}} + \norm{L^6(\Omega)}{u_h^{(0)}}^3\norm{L^2(\Omega)}{v_h} \\
&\leq \left(  \norm{L^2(\Omega)}{f} + max \left\{k_{out}, C_{Sob}^2 \norm{H^1(\Omega)}{u_h^{(0)}}^3 \right\} \right)\norm{H^1(\Omega)}{v_h} .
\end{aligned}
\]
Nevertheless, the bilinear form is not coercive in $V_h$, indeed:
\[
ds[u_h^{(0)}](u_h,u_h) = \int_{\Omega}{k \nabla u_h \cdot \nabla u_h }  + \int_{\Omega}{3 \chi_{\Omega \setminus \omega} \left(u_h^{(k)} \right)^2 u_h^2}
\]	
and a lower bound of the latter quantity in terms of the $H^1$-norm of $u_h$ cannot be obtained because of the presence of the indicator function over $\Omega \setminus \omega$ in the reaction term. The weak coercivity is instead guaranteed, with constant $k_{in}>0$:
\[
\begin{aligned}
ds[u_h^{(0)}](u_h,u_h) + k_{in} \norm{L^2(\Omega)}{u_h} &=  \int_{\Omega}{k \nabla u_h \cdot \nabla u_h }  + \int_{\Omega}{3 \chi_{\Omega \setminus \omega} \left(u_h^{(0)} \right)^2 u_h^2} + \int_{\Omega}{k_{in} u_h^2} \\
& \geq \int_{\omega}{k_{in} \nabla u_h \cdot \nabla u_h } + \int_{\Omega \setminus \omega}{k_{out} \nabla u_h \cdot \nabla u_h } + \int_{\Omega}{k_{in} u_h^2} \\
& \geq k_{in} \norm{L^2(\Omega)}{\nabla u_h}^2 + k_{in} \norm{L^2(\Omega)}{u_h}^2 = k_{in} \norm{H^1(\Omega)}{u_h}.
\end{aligned}
\]	
Hence, it is possible to apply the Ne\c cas theorem (or the Fredholm Alternative), see e.g. \cite{book:evans}, Chapter 6, which entails the well-posedness of the problem \eqref{eq:newtonhw} for $k=0$ only if the \textit{homogeneous problem} has an unique solution, i.e.:
\begin{equation}
ds[u_h^{(0)}](u_h,v_h) = 0 \quad \forall v_h \in V_h \quad \Leftrightarrow \quad u_h = 0.
\label{eq:omogeneo}
\end{equation}
To prove it, consider that, if $w \in V_h \subset V = H^1(\Omega)$ solves \eqref{eq:omogeneo}, then it also satisfies:
\[
	\int_{\Omega}{k \nabla w \cdot \nabla v_h} + \int_{\Omega \setminus \omega}{3\left(u_h^{(0)}\right)^2wv_h} = 0 \quad \forall v_h \in V_h.
\] 
Hence, with $v_h = w$,
\begin{equation}
	\nabla w = 0 \text{  in } \Omega, \quad w = 0 \text{  in } \Omega \setminus \omega.
	\label{eq:w}
\end{equation}
In order to conclude that $w = 0$ in $\Omega$, it is necessary to prove some extra regularity conditions on the solution $w$: in particular, it is possible to show that $w \in C^{0,\alpha}(\Omega)$. Indeed, the term $\chi_{\Omega \setminus \omega} 3\left(u_h^{(0)}\right)^2w$ is bounded in $L^p(\Omega)$ for all $p \geq 1$, because of the choice of the starting solution. Moreover, $w$ satisfies
\[
	-\diverg(k \nabla w) = -\chi_{\Omega \setminus \omega} 3\left(u_h^{(0)}\right)^2w,
\]
and exploiting Theorem 8.24 in \cite{book:gt} we can prove the interior estimate: 
\begin{equation}
\forall \Omega' \subset \subset \Omega, \quad \exists C, \alpha >0: \quad \norm{C^{0,\alpha}(\bar{\Omega'})}{w} \leq C \left( \norm{L^2(\Omega')}{w} + \norm{L^p(\Omega)}{3\left(u_h^{(0)}\right)^2w} \right).
\label{eq:gt}
\end{equation}
Hence, by fixing $\Omega ' \supset \omega$ and considering that $k(x) = k_{out}$ is constant on $\Omega \setminus \Omega'$, under mild regularity hypothesis on the domain $\Omega$, one may recover the Holder-continuity of $w$ on the whole $\bar{\Omega}$, entailing the uniqueness of the solution of \eqref{eq:omogeneo} and thus the well-posedness of \eqref{eq:newtonhw} for $k=0$.
\\The stability estimate is guaranteed by Ne\c cas' theorem, yielding
\[
	\norm{H^1(\Omega)}{u_h^{(1)}} \leq \frac{1}{k_{in}}\norm{H^{-1}}{g(u_h^{(0)},\cdot)} \leq C_k (\norm{L^2(\Omega)}{f} + \norm{H^1(\Omega)}{u_h^{(0)}}^3).
\]
Hence, the solution $\delta u_h^{(0)}$ of \eqref{eq:newtonhw} with $k = 0$ exists and is unique in $V_h$, and with a procedure similar to the one used on the homogeneous problem, one may prove that $\delta u_h^{(0)} \in C^{0,\alpha} \supset L^{\infty}(\Omega)$. Moreover, also $u_h^{(1)} \in H^1(\Omega) \cap L^{\infty}(\Omega)$, and this can be iterated to prove the thesis on each $k>0$, by induction.
\end{proof}
We have therefore set the numerical strategy for the approximate solution of problem \eqref{eq:weak}: the well-posedness of the algebraic problems \eqref{eq:linAlg} to be solved at each step is entailed by the latter proposition, whereas the convergence of the sequence $\{ u_h \}_{h>0}$ is guaranteed by the Kantorovich theorem (see \cite{book:kant2}), which exploits the Lipschitz-continuity of the functional $\mathcal{S}(u) = T(u) - F$.

\addcontentsline{toc}{section}{\refname}
\printbibliography
\end{document}